\newtheorem{theorem}{Theorem}
\newtheorem{lemma}{Lemma}
\newtheorem{corollary}{Corollary}
\newtheorem{proposition}{Proposition}
\newcommand{\ind}{\mathbbm{1}}
\newcommand{\E}{\mathbb{E}}
\newcommand{\cE}{\mathcal{E}}
\newcommand{\cR}{\mathcal{R}}
\newcommand{\cH}{\mathcal{H}}
\newcommand{\var}{\textnormal{var}}
\newcommand{\bigO}{\mathcal{O}}
\newcommand{\norm}[1]{\left\lVert#1\right\rVert}
\newcommand{\Bin}{\textnormal{Bin}}
\DeclarePairedDelimiterX{\infdivx}[2]{(}{)}{%
  #1\;\delimsize\|\;#2%
}
\DeclarePairedDelimiter\floor{\lfloor}{\rfloor}
\begin{document}

\title{Learning Smooth Populations of Parameters \\ with Trial Heterogeneity}
\author[1]{JungHo Lee\thanks{\url{junghol@andrew.cmu.edu}}}
\author[2]{Valerio Ba\'{c}ak\thanks{\url{valerio.bacak@rutgers.edu}}}
\author[1]{Edward H. Kennedy\thanks{\url{edward@stat.cmu.edu}}}
\affil[1]{Department of Statistics and Data Science, Carnegie Mellon University}
\affil[2]{School of Criminal Justice, Rutgers University}
\date{\today}


\maketitle

\begin{abstract}
We consider the classical problem of estimating the mixing distribution of binomial mixtures, but under \emph{trial heterogeneity} and \emph{smoothness}.
This problem has been studied extensively when the trial parameter is homogeneous, but not under the more general scenario of heterogeneous trials, and only within a low smoothness regime, where the resulting rates are slow. 
Under the assumption that the density is $s$-smooth, we derive fast error rates for the kernel density estimator under trial heterogeneity that depend on the harmonic mean of the trials.
Importantly, even when reduced to the homogeneous case, our result improves on the state-of-the-art rate of \cite{ye2021binomial}.
We also study nonparametric estimation of the difference between two densities, which can be smoother than the individual densities, in both i.i.d. and binomial-mixture settings.
Our work is motivated by an application in criminal justice: comparing conviction rates of indigent representation in Pennsylvania. 
We find that the estimated conviction rates for appointed counsel (court-appointed private attorneys) are generally higher than those for public defenders, potentially due to a confounding factor: appointed counsel are more likely to take on severe cases. 

\end{abstract}

\noindent
{\it Keywords:} Binomial mixtures, mixing distribution, nonparametric density estimation, density difference, criminal justice
\vfill

\newpage
\section{Introduction}
\label{sec:intro}
The problem of estimating the mixing distribution under binomial mixtures has received longstanding attention in the statistics literature \citep{teicher1963identifiability, lord1967estimating, lord1975empirical, sivaganesan1993robust, wood1999binomial, tian2017learning, vinayak2019optimal, vinayak2019maximum, ye2021binomial}. 
In this problem, we observe $n$ binomial random variables, each with potentially unique probabilities and number of trials. Early works primarily addressed identifiability \citep{teicher1963identifiability, wood1999binomial}; 
recent studies focus on optimal estimation under different regimes \citep{tian2017learning, vinayak2019maximum, ye2021binomial} as well as testing \citep{kania2025testingrandomeffectsbinomial}. 

Surprisingly, most prior work has focused on the special case in which the number of trials is homogeneous, even though heterogeneous trials are ubiquitous. For instance, in our application to estimating the conviction rates by attorney type in Pennsylvania, the number of cases vary significantly across counties. \cite{kline2021reasonable}, which develops a model for detecting employer discrimination, is a notable exception. 

Even in contexts where the trial parameters are naturally heterogeneous, existing approaches largely remain ad hoc (e.g., trimming at the minimum number of trials, as in \cite{tian2017learning}), leaving much of the additional information from the trials untapped. Trial heterogeneity arises more frequently in the Bayesian literature where trials are random and treated as parameters of interest themselves \citep{royle2004n, wenger2008estimating, dail2011models, wu2015bayesian}. By contrast, the binomial mixing distribution literature regards the trial parameter as fixed. We likewise condition on the realized $t_i$'s, imposing a strictly weaker assumption and gaining robustness to any mis-specification of the trial distribution.

Moreover, the problem has largely been studied under minimal structural assumptions, such as low smoothness \citep{ye2021binomial}, which leads to slow rates. 
In contrast, more structural assumptions, such as higher-order smoothness, could yield significantly faster rates and thus be valuable in addressing these measurement error problems where known rates are generally slow \citep{carroll1998optimal, fan1993nonparametric}.

In this paper, we thus study nonparametric estimation of the binomial mixing density under \emph{trial heterogeneity} and \emph{smoothness}. 
We derive improved error rates using a kernel density estimator under $s$-H\"{o}lder smoothness, a standard structural assumption in the nonparametric literature \citep{tsybakov2008introduction}. 
Notably, when reduced to trial homogeneity, our result improves upon the rates from \cite{ye2021binomial} where the density is assumed only to be Lipschitz (e.g., $1$-smooth). 

Further, we analyze the  \emph{density difference}. 
This quantity is potentially of broader interest and has found widespread applications in areas including, but not limited to, change point detection \citep{kawahara2012sequential} and feature selection \citep{torkkola2003feature}. We study nonparametric estimation of the density difference when it is $\gamma$-smooth, where $\gamma \geq s$. We demonstrate that one can leverage standard nonparametric tools to optimally estimate this quantity with proper tuning.

We illustrate our method in an application in criminal justice. 
Specifically,  we compare the conviction rates of different indigent representations -- appointed counsel (court-appointed private attorneys) and public defenders -- by estimating their densities. 
Our data include the number of convictions (``successes") and total cases (``trials") for each county in Pennsylvania, which differ significantly across counties and attorney types and thus motivates the study of our problem.
We find that the estimated conviction rates for appointed counsel are generally higher than those for public defenders, potentially due to a confounding factor: appointed counsel are more likely to take on severe cases. 
This suggests a need for further investigation into how appointed counsel are systemically assigned to cases in Pennsylvania.

\subsection{Setup \& Notation}
\label{sec:setup}
We assume independent data $X_1,\ldots,X_n$ where $X_i \mid Q_i=q \sim \Bin(t_i,q)$, $Q_i$ is drawn from the mixing density $p$ supported on $[0,1]$, and $t_i$ is fixed and known, for $i=1,\ldots,n$. Under the classical setting with homogeneous trials, we would replace $t_i$ with $t$. 

Our goal is to study the estimation of $p$ at a point $u \in (0,1)$ with absolute pointwise bias $|\E\{\hat{p}(u)\}-p(u)|$ and variance $\var\{\hat{p}(u)\}$.
We consider the canonical \emph{kernel density estimator} (KDE) 
    \begin{equation}\label{eq:kde}
        \hat{p}_h(u) \coloneqq \frac{1}{n}\sum_{i=1}^n\frac{1}{h}K\bigg(\frac{X_i/t_i-u}{h}\bigg),
    \end{equation}
constructed from empirical proportions $X_i/t_i$,
for a valid kernel $K$ and some bandwidth $h>0$. 
At times, we write 
$K_h(X_i/t_i) \coloneqq K\{(X_i/t_i - u)/h\}/h$ following the notational convention.

Our key assumption is that $p$ is $s$-smooth, i.e., $p$ belongs to a H\"{o}lder class with index $s$. More precisely, let $\floor{s}$ denote the largest integer strictly smaller than $s$. Then the H\"{o}lder class indexed by $s$ contains all functions $f$ that are $\floor{s}$ times continuously differentiable with derivatives up to order $\floor{s}$ bounded, and the $\floor{s}$ order derivatives Lipschitz. That is,
\begin{equation*}
    |\partial^\alpha f(u)| \leq L \text{ for all } \alpha \leq \floor{s}, \quad \text{and} \quad |\partial^{\floor{s}} f(u) - \partial^{\floor{s}} f(u')| \leq L|u-u'|^{s-\floor{s}},
\end{equation*} 
for some constant $L>0$ and for all $u, u' \in [0,1]$. For simplicity, we omit the constant $L$.

We use $\|p\|_\infty \coloneqq \sup_{u} |p(u)|$ for the sup norm of a real-valued function $p$. For two real sequences $a_n$ and $b_n$, we write $a_n \lesssim b_n$ if $a_n \leq Cb_n$ and similarly $a_n \gtrsim b_n$ if $a_n \geq Cb_n$ for some $C>0$ independent of $n$.

\paragraph{Paper outline.} The remainder of the paper is organized as follows. In Section \ref{sec:theory}, we provide our main theoretical results, including the error bounds for estimating $s$-smooth mixing densities under trial heterogeneity. In Section \ref{sec:den_diff}, we study the density difference. In Sections \ref{sec:simulation} and \ref{sec:application}, we perform relevant simulation studies and discuss our application to estimating conviction rates by attorney type, respectively. Finally, we conclude our work with potential directions for future research in Section \ref{sec:conclusion}.

\section{Main Theoretical Results}\label{sec:theory}
We begin by presenting a key lemma that underpins the majority of our analysis. 

\subsection{Error Bounds for Smooth Mixing Densities}\label{sec:error_bounds}
Central to our analysis is bounding the bias that arises from using empirical proportions $X_i/t_i$ instead of true proportions $Q_i$. To do so, one needs to consider an error of the form
\begin{equation}\label{eq:p_weighted_bernstein_error}
    \cE(f_i) \coloneqq 
    \int \bigg\{\sum_{x=0}^{t_i}f(x/t_i) \binom{t_i}{x} q^{x}(1-q)^{t_i-x}-f(q)\bigg\}p(q)dq,
\end{equation}
i.e., $\int\big[\E\{f(X_i/t_i) \mid Q_i=q\}-f(q)\big]p(q)dq$ for some function $f$. More precisely, \eqref{eq:p_weighted_bernstein_error} is the $p$-weighted integrated error of a Bernstein polynomial approximation of $f$ of degree $t_i$.\footnote{The subscript $i$ on $f$ in the right side of \eqref{eq:p_weighted_bernstein_error} is meant to highlight the dependence on $i$.}

Although we are primarily interested in the case where $f=K_h$, we first prove a powerful lemma that controls the average of the Bernstein error \eqref{eq:p_weighted_bernstein_error} for a \emph{generic} $f$.

\begin{lemma}[Bound on Bernstein approximation error]\label{lem:bound_on_error}
Assume that
\begin{enumerate}
    \item $p$ is $s$-smooth,
    \item $\norm{p}_\infty \leq p_{max} $, and \label{cond:p_bdd}
    \item $|p(u) - p(u')| \leq L|u-u'|^\alpha$ for some $0 < \alpha \leq 1$ and for all $u, u' \in [0,1]$. \label{cond:p_holder_cont}
\end{enumerate} Then
\begin{equation}\label{eq:bound_bern_lemma}
     \left|\frac{1}{n}\sum_{i=1}^n\cE(f_i)\right| \leq \frac{1}{n}\sum_{i=1}^n \left\{g(t_i,\alpha)\sum_{x=0}^{t_i}\frac{|f(x/t_i)|}{t_i+1} + \cR(f_ip)\right\},
 \end{equation}
where
\begin{equation*}\label{eq:g}
    g(t_i,\alpha) \coloneqq L\left(\frac{1/4}{t_i+3}\right)^{\alpha/2} + L\left(\frac{1}{t_i+2}\right)^\alpha + \frac{p_{\max}}{t_i},
\end{equation*}
and
 \begin{equation}\label{eq:quasi-riemann}
     \cR(f_ip) \coloneqq \left|\sum_{x=0}^{t_i}\frac{f(x/t_i)p(x/t_i)}{t_i} - \int f(q)p(q)dq \right|,
 \end{equation}
 is a quasi-Riemann sum approximation error to $\int f(q)p(q)dq$.\footnote{We use the term ``quasi-Riemann" since the sum is a left/right sum with one extra term.}
\end{lemma}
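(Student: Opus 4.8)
The plan is to recognize that $\cE(f_i)$ is exactly the $p$-weighted integral of the Bernstein-polynomial residual $B_{t_i}f - f$, where $B_t g(q) \coloneqq \sum_{x=0}^t g(x/t)\binom{t}{x}q^x(1-q)^{t-x}$, and then---this is the key move---to integrate against $p$ \emph{before} approximating, so that the smoothness of $p$ (which we control) rather than that of $f$ (which we do not) does all the work. Concretely, I would swap the finite sum with the integral,
\begin{equation*}
\int B_{t_i}f(q)\, p(q)\, dq = \sum_{x=0}^{t_i} f(x/t_i) \int \binom{t_i}{x} q^x (1-q)^{t_i - x} p(q)\, dq ,
\end{equation*}
and identify the inner integral through the Beta normalizing constant: since $(t_i+1)\binom{t_i}{x}q^x(1-q)^{t_i-x}$ is the $\mathrm{Beta}(x+1,\,t_i-x+1)$ density, the integral equals $\frac{1}{t_i+1}\,\E\{p(W_{i,x})\}$ with $W_{i,x}\sim \mathrm{Beta}(x+1,\,t_i-x+1)$.

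Next I would replace $\E\{p(W_{i,x})\}$ by $p(x/t_i)$ at the cost of two controlled errors. First, Hölder continuity of $p$ (Condition~\ref{cond:p_holder_cont}) together with Jensen's inequality (concavity of $z\mapsto z^{\alpha/2}$ for $\alpha\le 1$) gives $|\E\{p(W_{i,x})\} - p(\E W_{i,x})| \le L\,\var(W_{i,x})^{\alpha/2} \le L\{1/(4(t_i+3))\}^{\alpha/2}$, where $\var(W_{i,x}) = \frac{(x+1)(t_i-x+1)}{(t_i+2)^2(t_i+3)} \le \frac{1}{4(t_i+3)}$ by AM--GM. Second, since $\E W_{i,x} = (x+1)/(t_i+2)$ and $|(x+1)/(t_i+2) - x/t_i| = |t_i-2x|/\{t_i(t_i+2)\} \le 1/(t_i+2)$, Hölder continuity again gives $|p(\E W_{i,x}) - p(x/t_i)| \le L\{1/(t_i+2)\}^\alpha$. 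Hence $\int B_{t_i}f\,p = \sum_{x=0}^{t_i} \frac{f(x/t_i)}{t_i+1}\{p(x/t_i) + \rho_{i,x}\}$ with $|\rho_{i,x}| \le L\{1/(4(t_i+3))\}^{\alpha/2} + L\{1/(t_i+2)\}^\alpha$.

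It then remains to compare $\sum_{x=0}^{t_i} \frac{f(x/t_i)p(x/t_i)}{t_i+1}$ with $\int f p$. I would write $\frac{1}{t_i+1} = \frac{1}{t_i} - \frac{1}{t_i(t_i+1)}$, so the leading piece is the quasi-Riemann sum $\sum_{x=0}^{t_i} \frac{f(x/t_i)p(x/t_i)}{t_i}$, whose distance from $\int fp$ is by definition $\cR(f_ip)$, while the remainder is bounded in absolute value by $\frac{1}{t_i(t_i+1)}\sum_{x=0}^{t_i} |f(x/t_i)|\,\|p\|_\infty \le \frac{p_{\max}}{t_i}\sum_{x=0}^{t_i} \frac{|f(x/t_i)|}{t_i+1}$, using Condition~\ref{cond:p_bdd}. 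Collecting the three error contributions ($\rho_{i,x}$ and the $p_{\max}/t_i$ remainder bundle into $g(t_i,\alpha)\sum_{x=0}^{t_i}\frac{|f(x/t_i)|}{t_i+1}$, plus the $\cR(f_ip)$ term) yields $|\cE(f_i)| \le g(t_i,\alpha)\sum_{x=0}^{t_i}\frac{|f(x/t_i)|}{t_i+1} + \cR(f_ip)$; applying the triangle inequality over $i$ and dividing by $n$ gives \eqref{eq:bound_bern_lemma}.

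The only genuinely delicate point is the structural decision to integrate against $p$ before approximating, together with the bookkeeping of the three ``$+1$'' discrepancies---the $1/(t_i+1)$ Beta weight versus the $1/t_i$ Riemann weight, the shift from $x/t_i$ to the Beta mean $(x+1)/(t_i+2)$, and the Beta variance term---each of which contributes precisely one summand of $g(t_i,\alpha)$. Everything else (the explicit mean/variance of the Beta law, AM--GM, the elementary bound $|t_i-2x|\le t_i$, and the final triangle inequality) I expect to be mechanical.
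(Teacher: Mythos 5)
Your proposal is correct and follows essentially the same route as the paper's proof: identify the inner integral via the $\mathrm{Beta}(x+1,t_i-x+1)$ density, control $\E\{p(B_{xt_i})\}-p(\E B_{xt_i})$ by the H\"older condition plus Jensen and the Beta variance bound, shift from the Beta mean to $x/t_i$ at cost $L(t_i+2)^{-\alpha}$, absorb the $1/(t_i+1)$ versus $1/t_i$ weight discrepancy into the $p_{\max}/t_i$ term, and leave the quasi-Riemann error as $\cR(f_ip)$. The only difference is cosmetic bookkeeping (the paper groups the mean-shift and weight-change errors into one intermediate term before splitting them), so no further comment is needed.
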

The key idea behind the proof of the above is to observe that
\begin{equation*}
    \int \sum_{x=0}^{t_i}f(x/t_i) \binom{t_i}{x} q^{x}(1-q)^{t_i-x}p(q)dq = \sum_{x=0}^{t_i} f(x/t_i)\frac{\E\{p(B_{xt_i})\}}{t_i+1},
\end{equation*} 
where $B_{xt_i}$ is a Beta random variable with parameters $x+1$ and $t_i-x+1$, and then decompose the Bernstein approximation error \eqref{eq:p_weighted_bernstein_error} as
\begin{align*}
     \cE(f_i) 
     &= \sum_{x=0}^{t_i} f(x/t_i)\frac{\E\{p(B_{xt_i})\} - p\{\E(B_{xt_i})\}}{t_i+1} + \sum_{x=0}^{t_i} f(x/t_i)\left\{\frac{p\{\E(B_{xt_i})\}}{t_i+1}-\frac{p(x/t_i)}{t_i}\right\} \\
     &+ \left\{\sum_{x=0}^{t_i} f(x/t_i)\frac{p(x/t_i)}{t_i} - \int f(q)p(q)dq \right\}, 
\end{align*}
so that the last term corresponds to the quasi-Riemann sum approximation error.

Notably, the error bound \eqref{eq:bound_bern_lemma} holds for any $f$ and simply relies on its discrete $L_1$ norm and the quasi-Riemann sum approximation error \eqref{eq:quasi-riemann}. On the other hand, classical Bernstein bounds \citep{Kac1938UneRS, lorentz1986approximation, lorentz2012bernstein, bojanic1989rate, mathe1000approximation} require smoothness of $f$ and its derivatives. With $f=K_h$, such bounds result in undesirable $1/h$ dependencies that lead to loose error bounds. Importantly, even under trial homogeneity with $f=K_h$, one can show that our Bernstein bound \eqref{eq:bound_bern_lemma} reduces to $\bigO(1/\sqrt{t} + 1/ht)$, which improves upon the bound $\bigO(h + 1/\sqrt{t} + 1/ht)$ in Theorem 8 of \cite{ye2021binomial} obtained using a histogram estimator.

Next, we present our first main theorem that characterizes upper bounds for pointwise bias and variance of the KDE \eqref{eq:kde} under trial heterogeneity and higher-order smoothness. The bias bound, in particular, consists of (1) the usual smoothing bias of order $h^s$ and (2) additional terms stemming from the average Bernstein error bound \eqref{eq:bound_bern_lemma} by setting $f=K_h$. Interestingly, the bounds depend on \emph{harmonic mean} of the trial parameters.

\begin{theorem}\label{thm:bound_on_error_kernel_het}
     Let $\widetilde{t} \coloneqq n/\sum_{i=1}^n t_i^{-1}$ denote the harmonic mean of the trial parameters $t_1,\ldots,t_n$. Assume the conditions of Lemma \ref{lem:bound_on_error}
     and additionally
     \begin{enumerate}
            \item $|K(v)| \leq K_{max}\ind(|v| \leq 1)$, and \label{cond:K_bdd}
            \item $|K(v)-K(v')| \leq M|v-v'|^{\beta}$ for some $0 < \beta \leq 1$ and for all $v,v' \in [-1,1]$. \label{cond:K_holder_cont}
            \item $\int K(u)du = 1$, \label{cond:K_int_1}
            \item $\int u^jK(u)du = 0$ for $j = 1,\ldots,\floor{s}$, and \label{cond:K_higher_order}
            \item $\int |u|^s|K(u)|du \leq B < \infty$. \label{cond:K_higher_bdd}
    \end{enumerate}
    Then, for $u \in (0,1)$, the pointwise bias of the kernel density estimator constructed from empirical proportions \eqref{eq:kde} is bounded as
    \begin{align*}
         |\E\{\hat{p}_h(u)\}-p(u)| \leq \frac{LB}{\floor{s}!}h^s + \frac{1}{n}\sum_{i=1}^n \left\{g(t_i,\alpha) \sum_{x=0}^{t_i}\frac{|K_h(x/t_i)|}{t_i+1} + r(h, t_i, \alpha, \beta)\right\},
    \end{align*}
    where $r(h,t_i,\alpha, \beta)$ is an upper bound on the quasi-Riemann sum approximation error \eqref{eq:quasi-riemann} defined as
    \begin{equation*}\label{eq:r}
        r(h,t_i,\alpha, \beta) \coloneqq \frac{K_{\max}p_{\max}}{ht_i} + \left(2+\frac{1}{ht_i}\right)\left\{LK_{\max}\left(\frac{1}{t_i}\right)^\alpha + 2Mp_{\max}\left(\frac{1}{ht_i}\right)^\beta \right\}.
    \end{equation*}
    Furthermore, the pointwise variance is bounded as
    \begin{equation*}
        \var\{\hat{p}_h(u)\} \leq \frac{K^2_{\max}p_{\max}}{nh}\bigg(2+\frac{1}{h\widetilde{t}}\bigg).
    \end{equation*}
\end{theorem}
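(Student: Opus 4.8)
The plan is to control the bias and the variance separately, in each case reducing to objects already bounded in the excerpt.

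For the bias, I would first write $\E\{\hat p_h(u)\}=\tfrac1n\sum_{i=1}^n\E\{K_h(X_i/t_i)\}$ and note that $\int K_h(q)p(q)\,dq$ is exactly the expectation of $K_h(Q_i)$ over $Q_i\sim p$, so
\[
\E\{\hat p_h(u)\}-p(u)=\frac1n\sum_{i=1}^n\cE\big((K_h)_i\big)\;+\;\Big\{\int K_h(q)p(q)\,dq-p(u)\Big\},
\]
where the first term is precisely $\tfrac1n\sum_i\cE(f_i)$ from \eqref{eq:p_weighted_bernstein_error} with $f=K_h$, and the second is the usual smoothing bias. For the smoothing bias I substitute $v=(q-u)/h$, subtract $p(u)$ using $\int K=1$ (condition \ref{cond:K_int_1}), Taylor-expand $p(u+hv)$ to order $\floor{s}$ with Lagrange remainder, and bound the remainder by $\tfrac{L}{\floor{s}!}|hv|^{s}$ via the Hölder condition on $p^{(\floor{s})}$; the vanishing-moment conditions \ref{cond:K_higher_order} annihilate the polynomial part and condition \ref{cond:K_higher_bdd} delivers the term $\tfrac{LB}{\floor{s}!}h^{s}$. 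This is the textbook KDE bias argument and is routine.

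For the averaged Bernstein term $\tfrac1n\sum_i\cE\big((K_h)_i\big)$ I would simply invoke Lemma~\ref{lem:bound_on_error} with $f=K_h$ (its hypotheses on $p$ are exactly those assumed in the theorem). This produces the summand $g(t_i,\alpha)\sum_{x=0}^{t_i}|K_h(x/t_i)|/(t_i+1)$ verbatim and reduces the entire bias bound to showing that the quasi-Riemann error $\cR\big((K_h)_i p\big)$ of \eqref{eq:quasi-riemann} is at most $r(h,t_i,\alpha,\beta)$ of \eqref{eq:r}.

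That last step is the technical core. Writing $g\coloneqq K_h\,p$, supported on $I\coloneqq[u-h,u+h]\cap[0,1]$, I decompose the error over the mesh cells $J_x=[x/t_i,(x+1)/t_i]$:
\[
\sum_{x=0}^{t_i}\frac{g(x/t_i)}{t_i}-\int_0^1 g(q)\,dq=\frac{g(1)}{t_i}+\sum_{x=0}^{t_i-1}\int_{J_x}\big\{g(x/t_i)-g(q)\big\}\,dq.
\]
The stray term satisfies $|g(1)|/t_i\le K_{\max}p_{\max}/(ht_i)$, the first piece of $r$. In the cell sum only the at most $2ht_i+1$ cells meeting $I$ contribute, and on such a cell, for $q\in J_x$,
\[
|g(q)-g(x/t_i)|\le \|K_h\|_\infty\,|p(q)-p(x/t_i)|+p_{\max}\,|K_h(q)-K_h(x/t_i)|\le \frac{K_{\max}L}{h\,t_i^{\alpha}}+\frac{Mp_{\max}}{h\,(ht_i)^{\beta}},
\]
using $\|K_h\|_\infty\le K_{\max}/h$, the $\alpha$-Hölder bound on $p$ (condition \ref{cond:p_holder_cont}), and the $\beta$-Hölder bound on $K$ rescaled by $1/h$ (condition \ref{cond:K_holder_cont}); the finitely many cells straddling $\partial I$ or the endpoints $\{0,1\}$ are absorbed by the cruder bound $|g(q)-g(x/t_i)|\le 2K_{\max}p_{\max}/h$. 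Multiplying by the cell length $1/t_i$ and the cell count converts $h^{-1}$ into the factor $(2+1/(ht_i))$ and reproduces $LK_{\max}(1/t_i)^{\alpha}+2Mp_{\max}(1/(ht_i))^{\beta}$, matching $r$. I expect this bookkeeping — isolating the boundary cells and making the interplay of the $1/h$ scaling of $K_h$, its $h^{\beta}$ Hölder factor, and the $1/t_i$ mesh come out to \emph{exactly} $r(h,t_i,\alpha,\beta)$ — to be the main obstacle; everything else is mechanical.

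For the variance, independence across $i$ gives $\var\{\hat p_h(u)\}=\tfrac1{n^2}\sum_i\var\{K_h(X_i/t_i)\}\le\tfrac1{n^2}\sum_i\E\{K_h(X_i/t_i)^2\}$. Conditioning on $Q_i=q$ and integrating,
\[
\E\{K_h(X_i/t_i)^2\}=\sum_{x=0}^{t_i}K_h(x/t_i)^2\int_0^1\binom{t_i}{x}q^x(1-q)^{t_i-x}p(q)\,dq\le\frac{p_{\max}}{t_i+1}\sum_{x=0}^{t_i}K_h(x/t_i)^2,
\]
using $\int_0^1\binom{t_i}{x}q^x(1-q)^{t_i-x}\,dq=1/(t_i+1)$ (the Beta identity behind Lemma~\ref{lem:bound_on_error}) and $\|p\|_\infty\le p_{\max}$. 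Bounding $K_h(x/t_i)^2\le(K_{\max}/h)^2\ind(|x/t_i-u|\le h)$ and counting at most $2ht_i+1$ lattice points $x/t_i$ in $[u-h,u+h]$ gives $\E\{K_h(X_i/t_i)^2\}\le \tfrac{K_{\max}^2 p_{\max}}{h}\big(2+\tfrac1{ht_i}\big)$ for each $i$; averaging over $i$ and using $\tfrac1n\sum_i t_i^{-1}=1/\widetilde t$ yields the stated variance bound. This step is direct.
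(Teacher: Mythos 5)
Your proposal is correct and follows essentially the same route as the paper: the same split into smoothing bias plus the averaged Bernstein error handled by Lemma \ref{lem:bound_on_error} with $f=K_h$, the same product-splitting of the quasi-Riemann error into a stray endpoint term, a $p$-H\"older term, and a $K$-H\"older term with the $2ht_i+1$ lattice-point count (the paper's Proposition \ref{prop:bound_on_riemann}, which uses the mean value theorem on each cell rather than your left-sum cell decomposition), and an identical variance argument via the Beta identity. The only discrepancy is constant-level bookkeeping at the cells straddling the support of $K_h$, where the paper obtains the factor $2Mp_{\max}$ by splitting the indicator of a minimum while you absorb those cells with a crude bound; this does not affect the substance of the argument.
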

The assumptions we make in Theorem \ref{thm:bound_on_error_kernel_het} are standard in the kernel density estimation literature. The first states that the kernel is bounded and supported on $[-1,1]$, and the second that the kernel is H\"{o}lder continuous with exponent $\beta$ on the support. The rest of the assumptions define what is called a \emph{higher-order kernel} to achieve the smoothing bias of order $h^s$. See \cite{tsybakov2008introduction} for how to construct bounded kernels of arbitrary order using Legendre polynomials. 

The following corollary gives a simplified result to ease interpretability.

\begin{corollary}\label{cor:bound_on_error_kernel_het}
    Assume the conditions of Theorem \ref{thm:bound_on_error_kernel_het} and additionally
    \begin{enumerate}
        \item $p$ and $K$ are Lipschitz, \label{cond:p_K_lipschitz} and
        \item $C/\widetilde{t} \leq h \leq C'$ for some $C,C' > 0$. \label{cond:bounded_h}
    \end{enumerate}
    Then the bounds in Theorem \ref{thm:bound_on_error_kernel_het} simplify to

     \begin{equation*}
        |\E\{\hat{p}_h(u)\}-p(u)| \lesssim h^s + \frac{1}{\sqrt{\widetilde{t}}} + \frac{1}{h\widetilde{t}} \quad \text{and} \quad \var\{\hat{p}_h(u)\} \lesssim \frac{1}{nh}.
    \end{equation*}
\end{corollary}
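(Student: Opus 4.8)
The plan is to begin from the two bounds in Theorem~\ref{thm:bound_on_error_kernel_het}, specialize to the Lipschitz case so that $\alpha=\beta=1$, and then check that, under the bandwidth sandwich $C/\widetilde{t}\le h\le C'$, every term other than $h^s$, $\widetilde{t}^{-1/2}$ and $(h\widetilde{t})^{-1}$ is of the same or smaller order. The variance requires no work: Theorem~\ref{thm:bound_on_error_kernel_het} gives $\var\{\hat p_h(u)\}\le \frac{K_{\max}^2 p_{\max}}{nh}\bigl(2+\frac{1}{h\widetilde{t}}\bigr)$, and the lower bound $h\ge C/\widetilde{t}$ forces $\frac{1}{h\widetilde{t}}\le 1/C$, so the parenthetical factor is a constant and $\var\{\hat p_h(u)\}\lesssim \frac{1}{nh}$.

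For the bias I would first simplify the two $t_i$-dependent pieces appearing in Theorem~\ref{thm:bound_on_error_kernel_het}. With $\alpha=1$ we have $g(t_i,1)=L\sqrt{\tfrac{1/4}{t_i+3}}+\tfrac{L}{t_i+2}+\tfrac{p_{\max}}{t_i}\lesssim t_i^{-1/2}$, since $t_i\ge 1$ makes the two $t_i^{-1}$ pieces dominated by $t_i^{-1/2}$. For the kernel sum, boundedness and compact support of $K$ imply that $K_h(x/t_i)$ is nonzero only for the at most $2ht_i+1$ integers $x$ with $|x/t_i-u|\le h$, and each such term is at most $K_{\max}/h$, so $\sum_{x=0}^{t_i}\frac{|K_h(x/t_i)|}{t_i+1}\le \frac{K_{\max}(2ht_i+1)}{h(t_i+1)}\lesssim 1+\frac{1}{ht_i}$. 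Hence $g(t_i,1)\sum_{x}\frac{|K_h(x/t_i)|}{t_i+1}\lesssim t_i^{-1/2}+h^{-1}t_i^{-3/2}$, and expanding $r(h,t_i,1,1)$ produces a finite sum of monomials $t_i^{-a}h^{-b}$ with $a\ge1$, the most delicate being the $(ht_i)^{-2}$ contribution coming from the product $\frac{1}{ht_i}\cdot\frac{1}{ht_i}$.

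Next I would average over $i$. The definition of the harmonic mean gives $\frac1n\sum_i t_i^{-1}=\widetilde{t}^{-1}$, Jensen's inequality applied to the concave function $\sqrt{\cdot}$ at the points $t_i^{-1}$ gives $\frac1n\sum_i t_i^{-1/2}\le\widetilde{t}^{-1/2}$, and $t_i\ge1$ lets one replace any $t_i^{-a}$ with $a>1$ by $t_i^{-1}$. After these substitutions the averaged bias is bounded by $h^s+\widetilde{t}^{-1/2}+(h\widetilde{t})^{-1}$ up to leftover terms still carrying extra negative powers of $h\widetilde{t}$; these are absorbed using $h\ge C/\widetilde{t}$, which makes $(h\widetilde{t})^{-1}\le 1/C$ and hence $(h\widetilde{t})^{-k}\lesssim(h\widetilde{t})^{-1}$ for every $k\ge1$, together with $\widetilde{t}^{-1}\le\widetilde{t}^{-1/2}$ and $h\le C'$ for the remaining constants. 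This delivers $|\E\{\hat p_h(u)\}-p(u)|\lesssim h^s+\widetilde{t}^{-1/2}+(h\widetilde{t})^{-1}$.

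The smoothing-bias term $h^s$ and the variance bound are essentially free; the step I expect to be the real obstacle is the last one — verifying that the assorted error terms inherited from $r$ (some of which are super-linear in $1/(ht_i)$) really do average down to $(h\widetilde{t})^{-1}$. This is exactly where the lower bound $h\gtrsim1/\widetilde{t}$ is indispensable, and it is also the point at which one must be careful about how an average such as $\frac1n\sum_i t_i^{-2}$ compares with $\widetilde{t}^{-2}$: since $\frac1n\sum_i t_i^{-2}\ge\widetilde{t}^{-2}$ always, the matching upper bound that closes the argument leans on the trials not being too wildly dispersed, and pinning down precisely how this enters (beyond the crude $t_i\ge1$) is the part of the proof I would scrutinize most.
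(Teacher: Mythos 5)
Your overall route is the same as the paper's: set $\alpha=\beta=1$, bound the discrete kernel $L_1$ norm by $K_{\max}\{2+(ht_i)^{-1}\}$, use $\frac1n\sum_i t_i^{-1}=\widetilde t^{-1}$ and Jensen for $\frac1n\sum_i t_i^{-1/2}\le\widetilde t^{-1/2}$, and use the bandwidth window to kill the remaining factors; the variance argument is identical and complete. The one place your argument does not close --- and you flag it yourself --- is the quadratic term $2Mp_{\max}(ht_i)^{-2}$ coming from the product $(ht_i)^{-1}\cdot(ht_i)^{-1}$ in $r(h,t_i,1,1)$. Your proposed fixes both fail: (i) reducing $t_i^{-2}\le t_i^{-1}$ and then averaging leaves $h^{-2}\widetilde t^{-1}=h^{-1}(h\widetilde t)^{-1}$, which carries a stray $1/h$; and (ii) the observation that $(h\widetilde t)^{-k}\lesssim(h\widetilde t)^{-1}$ is about the target quantity, not about the quantity you actually have, which is $\frac1n\sum_i(ht_i)^{-2}$ --- and Jensen gives $\frac1n\sum_i(ht_i)^{-2}\ge(h\widetilde t)^{-2}$, the wrong direction, exactly as you note. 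So as written the bias bound is not established.

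The paper closes this step differently, and the order of operations matters: it never expands into monomials. Instead it bounds the two nuisance \emph{factors} by constants for each $i$ \emph{before} averaging --- the discrete kernel sum $\sum_x|K_h(x/t_i)|/(t_i+1)\le C''$ and the factor $2+(ht_i)^{-1}\le 2+1/C$ --- so that every remaining summand is exactly linear in $t_i^{-1/2}$, $t_i^{-1}$, or $(ht_i)^{-1}$, and the harmonic mean then appears by definition (or by Jensen for the square root), with $\widetilde t^{-1}\le C'(h\widetilde t)^{-1}$ from $h\le C'$ finishing it. Note that this termwise bound genuinely uses $ht_i\ge C$ for every $i$ (the per-unit form that appears explicitly as condition 2 of Proposition \ref{prop:clt}), not merely $h\widetilde t\ge C$: with one unit having $t_1=1$ and the rest having enormous $t_i$, one gets $\widetilde t\approx n$ while $\frac1n\sum_i(ht_i)^{-2}\approx (nh^2)^{-1}$, which is not $\lesssim(h\widetilde t)^{-1}\approx(nh)^{-1}$ when $h\to0$. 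To repair your proof, adopt the paper's per-$i$ reading of the bandwidth condition and bound the multiplicative factors first; the rest of your argument (Jensen for $t_i^{-1/2}$, the exact harmonic-mean identity for $t_i^{-1}$ and $(ht_i)^{-1}$, and the constant variance factor) then goes through verbatim.
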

The second condition constrains $h$ to be in the ``right window" and caps $1/h\widetilde{t}$ in both the bias and variance terms by a fixed constant.

Corollary \ref{cor:bound_on_error_kernel_het} encapsulates our key result: Under trial heterogeneity and $s$-smoothness, the pointwise bias and variance of the KDE \eqref{eq:kde} are of order $h^s + 1/\sqrt{\widetilde{t}} + 1/h\widetilde{t}$ and $1/nh$, respectively. When reduced to homogeneity, the bias bound is retained by replacing the harmonic mean $\widetilde{t}$ with some homogeneous trial parameter $t$. As such, this result directly improves the state-of-the-art bias of order $h + 1/\sqrt{t} + 1/ht$ for a histogram estimator when the mixing distribution is assumed to be $1$-smooth \cite[Theorem 8]{ye2021binomial}. 

This contradicts the previous conjecture \cite[pg. 9]{ye2021binomial} that a better rate than theirs would not be attainable even with a higher-order smoothness, which stems from their error analysis yielding the bound $\bigO(h + 1/\sqrt{t} + 1/ht)$ that dominates the smoothing bias $\bigO(h^s)$. 
Our improvement is attributed to our novel Bernstein approximation error bound \eqref{eq:bound_bern_lemma} in Lemma \ref{lem:bound_on_error} that, with $f=K_h$, yields $\bigO(1/\sqrt{t} + 1/ht)$ instead. See Appendix \ref{sec:proof_cor1} for details.

\paragraph{Conditions for Oracle Minimax Estimation.} We now discuss conditions on the harmonic mean $\widetilde{t}$ required for \emph{oracle minimax estimation}, which we define as follows. In the canonical setting where we observe the i.i.d. true proportions, the minimax rate for estimating $p$, a 1-dimensional $s$-smooth density, is $n^{-1/(2+1/s)}$ \citep{vandervaart1998asymptotic, tsybakov2008introduction}. We call this the \emph{oracle $s$-smooth minimax density estimation rate}, or just the \emph{oracle rate} for short. To the best of our knowledge, the minimax rates for the binomial mixing density estimation problem under smoothness and trial heterogeneity are unknown. \cite{tian2017learning} and \cite{vinayak2019maximum} derive minimax rates in the Wasserstein-1 metric under trial homogeneity.

To achieve this rate, we need the bias induced by empirical proportions to be dominated by the variance so that we can balance the usual smoothing bias and variance, i.e.,
$1/\sqrt{nh} \gtrsim \max\{1/\sqrt{\widetilde{t}}, /h\widetilde{t}\}$.
If $h \sim n^{-1/(2s+1)}$ by the usual balancing so that the oracle rate can be achieved, this condition translates into
\begin{equation}\label{cond:minimax_het}
    \widetilde{t} \gtrsim \begin{cases}
        n^{\frac{1+1/s}{2+1/s}}, & s \leq 1, \\
        n^{\frac{2}{2+1/s}}, & s > 1.
    \end{cases}
\end{equation}   
When $s=1$, this matches $t \gtrsim n^{2/3}$ from \cite{ye2021binomial}. As $s \to \infty$, we face a steeper condition of $t \gtrsim n$, which stems from $1/\sqrt{nh} \gtrsim 1/\sqrt{t}$ being the dominant requirement when $s > 1$ and, in turn, $t$ having to grow to match the decreasing variance. However, once this is achieved, we can benefit from strictly faster rates from the smoothing bias $h^s$. In other words, exploiting $s$-smoothness of $p$ using the KDE \eqref{eq:kde} does not lower the bar on the size of $t$, but significantly raises the payoff.

\subsection{Tuning Parameter Selection}\label{sec:tuning}
We now discuss tuning parameter selection for nonparametric density estimators constructed from empirical proportions using \emph{Lepski's method} \citep{lepskii1991problem}. We consider a variant of the original approach proposed by \cite{spokoiny2019bootstrap}, which is succinctly described in \cite{chetverikov2024tuning}. The notation and discussion of the method closely follow \cite{chetverikov2024tuning}, but in the context of kernel density estimation, which is the main focus of this article. 

The core idea of the method is simple: Start with a large value of $h$, and gradually reduce it until further reduction of $h$ no longer significantly decreases the bias of $\hat{p}_h(u)$, where the ``significance" is assessed in terms of variance. Formally, define
\begin{equation*}
    b_{h,h'} \coloneqq \E\{\hat{p}_{h'}(u) - \hat{p}_{h}(u)\},  \quad p_{h,h'} \coloneqq \var\{\hat{p}_{h'}(u) - \hat{p}_{h}(u)\},
\end{equation*}
for all $h, h' \in \cH$ and $\cH^-(h) = \{h' \in \cH: h' < h\}$. The goal is to test the null hypothesis
\begin{equation*}
    H_h: |b_{h,h'}|^2 \leq p_{h,h'} \quad \text{for all } h' \in \cH^-(h),
\end{equation*}
for each $h \in \cH$. Intuitively, if $H_h$ is not rejected, there is no evidence that a smaller bandwidth than $h$ would significantly reduce the bias. Now, a size $\alpha$ test of $H_h$ rejects if
\begin{equation*}
    \max_{h' \in \cH^-(h)} \frac{|\hat{p}_{h'}(u) - \hat{p}_{h}(u)|}{\sqrt{p_{h,h'}}} > 1 + c_h(\alpha)
\end{equation*}
where $c_h(\alpha)$ is the $(1-\alpha)$ quantile of 
\begin{equation*}
    \max_{h' \in \cH^-(h)} \frac{|\hat{p}_{h'}(u) - \E\{\hat{p}_{h'}(u)\} - \hat{p}_{h}(u) + \E\{\hat{p}_{h'}(u)\}|}{\sqrt{p_{h,h'}}}.
\end{equation*}
To implement the method in practice, one can replace $p_{h,h'}$ with
\begin{equation*}
    \hat{p}_{h,h'} = \frac{1}{n(n-1)}\sum_{i=1}^n\bigg[\big\{K_{h'}(X_i/t_i) - K_{h}(X_i/t_i)\big\} - \big\{\hat{p}_{h'}(u) - \hat{p}_h(u)\big\}\bigg]^2,
\end{equation*}
and estimate $c_h(\alpha)$ using, e.g., multiplier bootstrap. For details, we point the reader to \cite{chetverikov2024tuning} and \cite{spokoiny2019bootstrap}. 

Denote $\hat{h}$ as the tuning parameter selected by the procedure and $h^*$ as the smallest value for which $H_h$ holds, i.e., the \emph{oracle} tuning parameter.
It follows that 
\begin{equation}\label{eq:lepski_bound}
    |\hat{p}_{\hat{h}}(u) - \hat{p}_{h*}(u)| \leq \max_{h \in \cH_{IN}} \sqrt{p_{h,h^*}} \{1 + c_h(\alpha)\},
\end{equation}
with probability at least $1-2\alpha$, where $\cH_{IN} = \{h \in \cH: h \geq h^*\} \backslash \cH^0$ and $\cH^0 = \{h \in \cH^+(h^*): |b_{h,h^*}|^2 > p_{h,h^*}\{c_h(\alpha) + c^-_{h^*}(\alpha)\}^2\}$. 

The quantity on the right side of \eqref{eq:lepski_bound} is the \emph{adaptation price} of the method, determined by (1) $c_h(\alpha)$ of order $\sqrt{\log|\cH|} \leq \sqrt{\log n}$, (2) $p_{h,h^*}$, and (3) $|\cH_{IN}|$, the quantity referred to as the insensitivity region (IR) \citep{spokoiny2019bootstrap}. In our setting, $p_{h,h^*}$ would be of order $(1/h^* - 1/h)/n$ under the conditions of Corollary \ref{cor:bound_on_error_kernel_het}. It thus follows that
\begin{equation*}
    \max_{h \in \cH_{IN}} \sqrt{p_{h,h^*}} \{1 + c_h(\alpha)\} \lesssim \begin{cases}
        \sqrt{\log n/nh^*}, & \text{large IR},\\
        \sqrt{\log n/n}, & \text{small IR},
    \end{cases}
\end{equation*}
which implies that, crucially, the overall adaptation price under empirical proportions can still be of \emph{identical} order to that under true proportions. 

Now, what remains is the comparison of the adaptation price with the oracle estimation error of order $h^{*s} + 1/\sqrt{nh^*} + 1/\sqrt{\widetilde{t}} + 1/\widetilde{t}h^*$. If we let $1/\sqrt{nh^*} \gtrsim \max\{1/\sqrt{\widetilde{t}}, 1/\widetilde{t}h^*\}$
so that the usual i.i.d. oracle error dominates, then we recover the typical story of Lepski's method: The adaptation price is larger than the oracle error under a large IR and smaller under a small IR. In particular, if $h^* \sim n^{-1/(2s+1)}$, then we have
\begin{equation*}
    \widetilde{t} \gtrsim \begin{cases}
        n^{\frac{1+1/s}{2+1/s}}, & s \leq 1, \\
        n^{\frac{2}{2+1/s}}, & s > 1,
    \end{cases}
\end{equation*}
which is the same condition as in the oracle minimax density estimation \eqref{cond:minimax_het}. If $h^*$ decreases at a slower rate, however, then we would be faced with a higher requirement. 

Overall, the condition on $\widetilde{t}$ to recover the ``usual i.i.d. story" of Lepski's method, in the best case scenario where the oracle bandwidth decreases at an optimal rate, can match that of minimax oracle density estimation.

\paragraph{Why not cross validation?} An alternative for tuning parameter selection is the omnipresent cross-validation with the pseudo-risk
\begin{equation*}
    \int \hat{p}_h(u)^2du -  \frac{2}{n}\sum_{j \in D_2^{n}} \hat{p}_h(X_j/t_j),
\end{equation*} 
and its minimizer $\hat{h}$. We do not consider this approach for two reasons. First, 
analyzing the adaptation price of cross-validation under empirical proportions can be nontrivial due to the random nature of $\hat{h}$. More importantly, the bar on $\widetilde{t}$ for the usual i.i.d. cross-validation story (for details, see Chapter 8 of \cite{gyorfi2002distributionfree}) to hold may be much higher than it is for Lepski's method. 

In particular, based on our theoretical results in Section \ref{sec:error_bounds}, we roughly expect the following for the KDE \eqref{eq:kde}. Even in the ``best case" scenario where both the data-driven and the oracle tuning parameters decrease at the optimal rate of $n^{-1/(2s+1)}$, the condition on $\widetilde{t}$ might be
\begin{equation*}
    \widetilde{t} \gtrsim \begin{cases}
        n, & s \leq 1/2, \\
        n^{4/(2+1/s)}, & s > 1/2,
    \end{cases}
\end{equation*}
which is already a more stringent condition than the one for minimax oracle density estimation \eqref{cond:minimax_het}, or even worse,
\begin{equation*}
    \widetilde{t} \gtrsim
    \begin{cases}
        n^{(1+1/s)/(2+1/s)}, & s \leq 1/3, \\
        n^{4/(2+1/s)}, & 1/3 < s \leq 1/2.
    \end{cases}
\end{equation*}
That is, recovering the usual story is only feasible in a low-smoothness regime. We thus leave cross-validation under empirical proportions for future work.

\subsection{Inference}\label{sec:inference}
One can adopt \emph{undersmoothing} \citep{hall1992bootstrap, chen2002confidence, chen2017tutorialkerneldensityestimation} to construct an asymptotically valid $1-\alpha$ confidence interval of the form
\begin{equation*}
    \hat{p}_h(u) \pm  z_{1-\alpha/2}\,\sqrt{\widehat\var\{\hat{p}_h(u)\}}, \quad \widehat\var\{\hat{p}_h(u)\} = \frac{1}{n(n-1)}\sum_{i=1}^n
   \{K_h(X_i/t_i) - \hat{p}_h(u)\}^2.
\end{equation*}
The following proposition establishes the asymptotic normality of the KDE \eqref{eq:kde} as well as the conditions for undersmoothing.
\begin{proposition}[Inference via undersmoothing]\label{prop:clt}
    Assume the conditions of Theorem \ref{thm:bound_on_error_kernel_het} and additionally 
    \begin{enumerate}
        \item  $h \to 0$ and $nh \to \infty$, \label{cond:clt_h_nh}
        \item $C/t_i \leq h \leq C'$ for all $i = 1,\ldots,n$ for some $C, C'>0$, \label{cond:clt_C}
        \item $p$ and $K$ are Lipschitz, and
        \item $nh^{s+1} \to \infty$, $\widetilde{t} \to \infty$, $(nh)^{-1}\widetilde{t} \to \infty$, and $(n/h)^{-1/2}\widetilde{t} \to \infty$. \label{cond:clt_undersmoothing}
    \end{enumerate}
    It follows that
    \begin{equation*}
        \sqrt{nh}\{\hat{p}_h(u) - p(u)\}\overset{d}{\to} N(0,\sigma^2), \quad \sigma^2 = \lim_{n \to \infty} nh\var\{\hat{p}_h(u)\}.
    \end{equation*}
    Furthermore, $\hat{p}_h(u) \pm  z_{1-\alpha/2}\,\sqrt{\widehat\var\{\hat{p}_h(u)\}}$ gives an asymptotically valid $1-\alpha$ confidence interval for $p(u)$.
\end{proposition}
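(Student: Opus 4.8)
The plan is to establish asymptotic normality via the Lindeberg--Feller central limit theorem applied to the triangular array of summands in $\hat{p}_h(u)$, and then to handle the bias and variance-estimation pieces separately so that the studentized statistic is asymptotically pivotal. Write $\sqrt{nh}\{\hat p_h(u)-p(u)\} = \sqrt{nh}[\hat p_h(u) - \E\{\hat p_h(u)\}] + \sqrt{nh}[\E\{\hat p_h(u)\} - p(u)]$. The second (bias) term is $\sqrt{nh}\cdot O(h^s + 1/\sqrt{\widetilde t} + 1/h\widetilde t)$ by Corollary~\ref{cor:bound_on_error_kernel_het}, and the four conditions in assumption~\ref{cond:clt_undersmoothing} are precisely what kill it: $nh^{s+1}\to\infty$ gives $\sqrt{nh}\,h^s\to 0$, $(n/h)^{-1/2}\widetilde t\to\infty$ gives $\sqrt{nh}/\sqrt{\widetilde t}\to 0$, and $(nh)^{-1}\widetilde t\to\infty$ gives $\sqrt{nh}/(h\widetilde t)\to 0$. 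So it remains to show the centered term converges to $N(0,\sigma^2)$.

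For the centered term, set $Y_{n,i} \coloneqq \frac{1}{\sqrt{nh}}[K_h(X_i/t_i) - \E\{K_h(X_i/t_i)\}]$, so that $\sqrt{nh}[\hat p_h(u)-\E\hat p_h(u)] = \sum_{i=1}^n Y_{n,i}$, a sum of independent, mean-zero terms. Its variance is $nh\var\{\hat p_h(u)\}$, which by definition converges to $\sigma^2$; I would also note $\sigma^2 > 0$ under the stated conditions (the leading term is $\|K\|_2^2 p(u)$ up to the standard normalization, and the correction $1/h\widetilde t$ vanishes by assumption~\ref{cond:clt_C} combined with $h\to 0$ forcing $\widetilde t\to\infty$ — actually $\widetilde t\to\infty$ is assumed directly). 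The Lindeberg condition reduces to a Lyapunov-type bound: $|K_h(X_i/t_i)| \le K_{\max}/h$ pointwise by assumption~\ref{cond:K_bdd}, so each $|Y_{n,i}| \lesssim 1/(h\sqrt{nh})$ deterministically, hence $\sum_i \E[Y_{n,i}^2 \ind(|Y_{n,i}|>\epsilon)] = 0$ for $n$ large once $1/(h\sqrt{nh}) < \epsilon$, i.e. once $nh^3 \to \infty$ — but more carefully one should use a Lyapunov third-moment bound: $\sum_i \E|Y_{n,i}|^3 \lesssim n \cdot \frac{1}{(nh)^{3/2}} \cdot \E|K_h(X_i/t_i) - \E K_h|^3$, and since the third central moment of a bounded-by-$K_{\max}/h$ variable with second moment $O(1/h)$ (uniformly in $i$, using the variance bound logic from Theorem~\ref{thm:bound_on_error_kernel_het}) is $O(1/h^2)$, this gives $\sum_i\E|Y_{n,i}|^3 \lesssim 1/\sqrt{nh} \to 0$ by assumption~\ref{cond:clt_h_nh}. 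That verifies Lyapunov's condition and yields the CLT.

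For the confidence interval, I would show $\widehat\var\{\hat p_h(u)\} / \var\{\hat p_h(u)\} \overset{p}{\to} 1$, equivalently $nh\,\widehat\var\{\hat p_h(u)\} \overset{p}{\to} \sigma^2$. Write $\widehat\var\{\hat p_h(u)\} = \frac{1}{n(n-1)}\sum_i \{K_h(X_i/t_i) - \hat p_h(u)\}^2$; expanding, this is $\frac{1}{n-1}[\frac1n\sum_i K_h(X_i/t_i)^2 - \hat p_h(u)^2]$. The term $\frac1n\sum_i K_h(X_i/t_i)^2$ has expectation equal to (something comparable to) $\frac1{h}\|K\|_2^2 p(u)/h$ up to the Bernstein-type corrections controlled by Lemma~\ref{lem:bound_on_error} applied with $f = K_h^2$, and variance $O(1/(n h^3))$ which vanishes by $nh^3\to\infty$ (implied by $nh^{s+1}\to\infty$ when $s\le 2$, though in general one may need $nh\to\infty$ plus boundedness — I'd argue $hK_h^2$ is bounded by $K_{\max}^2/h$ and use $nh\to\infty$) — so $\frac{h}{n}\sum_i K_h(X_i/t_i)^2 \overset{p}{\to}$ the leading variance constant, while $h\,\hat p_h(u)^2 \overset{p}{\to} 0$ since $\hat p_h(u)\overset{p}{\to} p(u)$ and $h\to 0$. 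Combining with $n/(n-1)\to 1$ gives the consistency, and then Slutsky's theorem applied to $\sqrt{nh}\{\hat p_h(u)-p(u)\}/\sqrt{nh\,\widehat\var\{\hat p_h(u)\}} \overset{d}{\to} N(0,1)$ delivers the claimed asymptotically valid interval.

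The main obstacle I anticipate is the bookkeeping in controlling the expectation and variance of $\frac1n\sum_i K_h(X_i/t_i)^2$ uniformly over the heterogeneous trials $t_i$: one must re-run the Bernstein-approximation argument of Lemma~\ref{lem:bound_on_error} with $f = K_h^2$ (whose discrete $L_1$ norm scales like $1/h$ rather than $O(1)$, since $K_h^2$ has sup norm $K_{\max}^2/h^2$) and check that the resulting error terms still vanish under assumption~\ref{cond:clt_undersmoothing} — this is where the precise interplay of the rates $h$, $\widetilde t$, and $n$ matters, and a slightly different combination of the four undersmoothing conditions may be needed for the variance-estimator step than for the bias step. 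Everything else is a routine application of Lyapunov's CLT and Slutsky.
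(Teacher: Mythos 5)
Your overall strategy coincides with the paper's: split $\hat p_h(u)-p(u)$ into the centered stochastic term and the bias, prove a triangular-array CLT for the former using boundedness of the kernel, and use the undersmoothing conditions to make $\sqrt{nh}$ times the bias bound of Corollary~\ref{cor:bound_on_error_kernel_het} vanish. The paper verifies Lindeberg's condition via a bounded-summands sufficient condition ($\max_i |X_{ni}|\leq L_n$ with $L_n/B_n\to 0$), whereas you use a Lyapunov third-moment bound; these are interchangeable here. You go beyond the paper in one respect: the paper never actually proves consistency of $\widehat\var\{\hat p_h(u)\}$ (it simply asserts the interval's validity), while you sketch that argument and correctly identify that it requires re-running Lemma~\ref{lem:bound_on_error} with $f=K_h^2$.

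Three bookkeeping slips should be repaired. First, the normalization: $\sqrt{nh}[\hat p_h(u)-\E\{\hat p_h(u)\}]=\sum_i Y_{n,i}$ with $Y_{n,i}=\sqrt{h/n}\,\big[K_h(X_i/t_i)-\E\{K_h(X_i/t_i)\}\big]$, not $(nh)^{-1/2}[\cdots]$. With the correct scaling, $|Y_{n,i}|\leq 2K_{\max}\sqrt{h/n}/h = 2K_{\max}/\sqrt{nh}\to 0$ deterministically, so the Lindeberg indicator is eventually identically zero under $nh\to\infty$ alone; no $nh^3\to\infty$ requirement arises and the Lyapunov detour is unnecessary (this is exactly the paper's route). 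Your Lyapunov sum also only evaluates to $O(1/\sqrt{nh})$ under the corrected scaling; with the scaling you wrote it is of order $n^{-1/2}h^{-7/2}$. Second, the attributions in the bias step are swapped: $\sqrt{nh}/\sqrt{\widetilde t}\to 0$ is equivalent to $(nh)^{-1}\widetilde t\to\infty$, while $\sqrt{nh}/(h\widetilde t)\to 0$ is equivalent to $(n/h)^{-1/2}\widetilde t\to\infty$. Third, and more substantively, $nh^{s+1}\to\infty$ does \emph{not} imply $\sqrt{nh}\,h^s\to 0$; the latter is equivalent to $nh^{2s+1}\to 0$, a different (and in the relevant regime opposite-direction) condition. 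This discrepancy is arguably inherited from the statement itself --- the paper's own discussion after the proposition requires $\zeta>1/(2s+1)$, i.e.\ $nh^{2s+1}\to 0$ --- but your write-up asserts the false implication as if it were immediate, so you should either work from $nh^{2s+1}\to 0$ directly or flag the inconsistency in the stated condition.
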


Condition \ref{cond:clt_undersmoothing} describes the rate assumptions necessary for undersmoothing.
If $h \sim n^{-\zeta}$ and $\widetilde{t} \sim n^\gamma$ for some $\zeta, \gamma > 0$, we need
\begin{equation*}
    \max\bigg(\frac{1}{2s+1},1-\gamma\bigg) < \zeta < 2\gamma-1,
\end{equation*}
where $\gamma > 1/2$ (otherwise $\zeta < 0$). The first inequality arises from the intersection of the first and second rate conditions, and the upper bound arises from the third. Roughly speaking, the choice of the bandwidth should be smaller than the oracle choice $h \sim n^{-1/(2s+1)}$, but not too small with respect to how fast the harmonic mean $\widetilde{t}$ grows. 

\section{Smooth Density Difference}\label{sec:den_diff}
Comparisons between two groups lie at the heart of statistics (e.g., two-sample testing) and science. In particular, we are often interested in the difference between two quantities, such as the difference in outcomes between the treated and control groups. To this end, we study nonparametric estimation of the \emph{density difference} for both true proportions $Q_i$ and empirical proportions $X_i/t_i$, assuming that each density is $s$-smooth. We first analyze the i.i.d. setting to show that the difference can be estimated optimally with standard non‑parametric techniques via proper tuning, and then turn to the binomial‑mixture setting, which is our main focus. The problem is analogous to the estimation of conditional average treatment effects in causal inference \citep{kennedy2023towards, kennedy2024minimax}, where one likewise seeks the difference in two regression functions.

\paragraph{True Proportions.} First, suppose that we have access to true proportions $Q_i$ and observe i.i.d. data $(Q_1, A_1),\ldots,(Q_n, A_n)$ where $A_i$ is a binary group indicator. The \emph{density difference} is defined as
\begin{equation}\label{eq:den_diff}
    \tau(u) \coloneqq p_1(u) - p_0(u),
\end{equation}
where $p_a(u) = p(u|A=a)$ for $a=0,1$ and $\int \tau(u)du = 0$. Importantly, $\tau(u)$ and $p_a(u)$ are likely to have different complexities; $\tau$ has to be at least as smooth as $p_a(u)$ and is potentially much more structured. 
Consider the following illustrative example adapted from \cite{kennedy2023towards} where the conditional densities are equal and given by
\begin{equation}\label{eq:nonsmooth_density}
p_a(u) = \frac{1}{C} 
\begin{cases}
    2u^2 + 2u, & 0 \leq u \leq 0.25, \\
    u, & 0.25 < u \leq 0.5, \\
    -u^2 + 2u + 0.2, & 0.5 < u \leq 0.75, \\
    1.5u - 1, &0.75 < u \leq 1,
\end{cases}
\end{equation}
with $C \approx 0.508$ denoting the normalizing constant. Clearly, each $p_a(u)$ is non-smooth, but the difference $\tau(u) = 0$ is $\infty$-smooth (Figure \ref{fig:diff_example}). 
\begin{figure}
    \centering
    \begin{subfigure}[b]{0.48\textwidth}
        \centering
        \includegraphics[width=\textwidth]{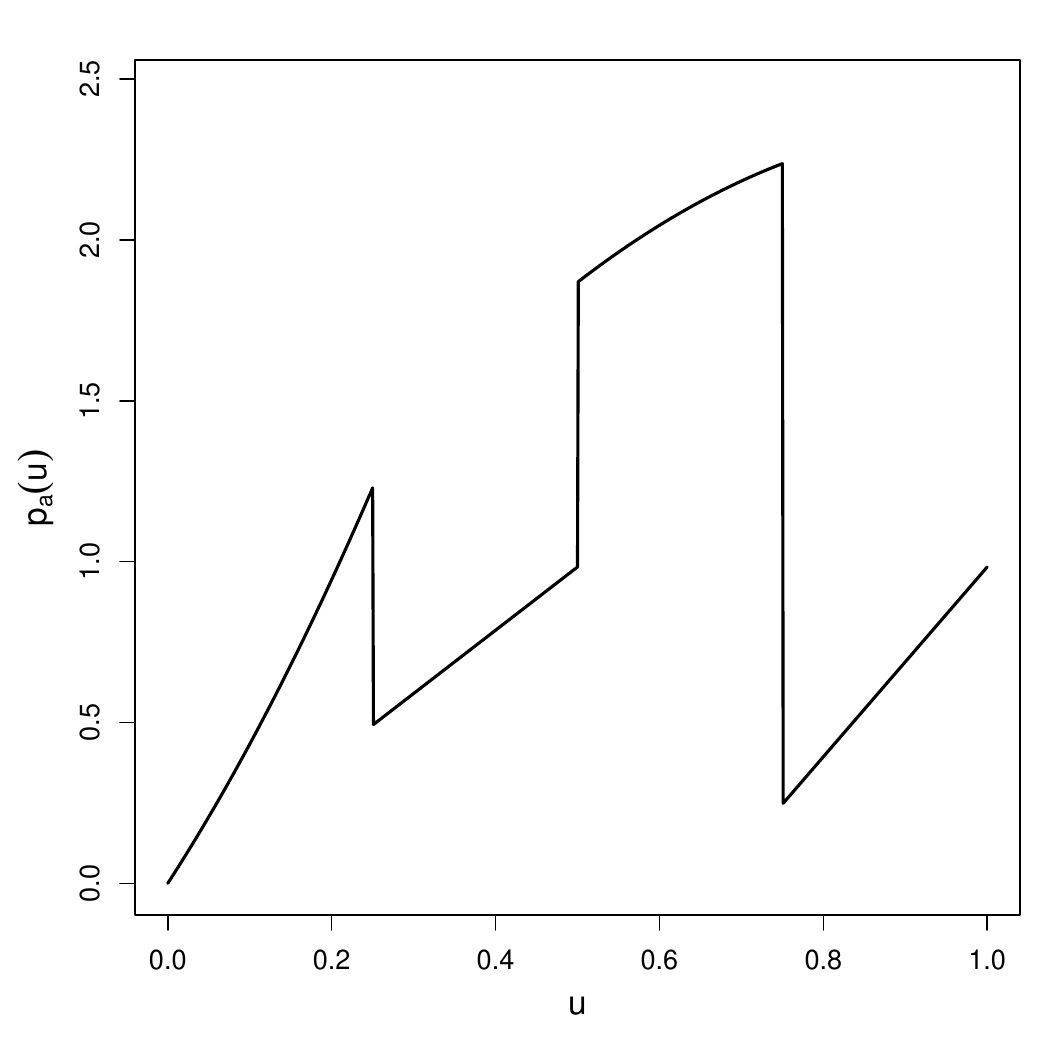}
    \end{subfigure}
    \hfill
    \begin{subfigure}[b]{0.48\textwidth}
        \centering
        \includegraphics[width=\textwidth]{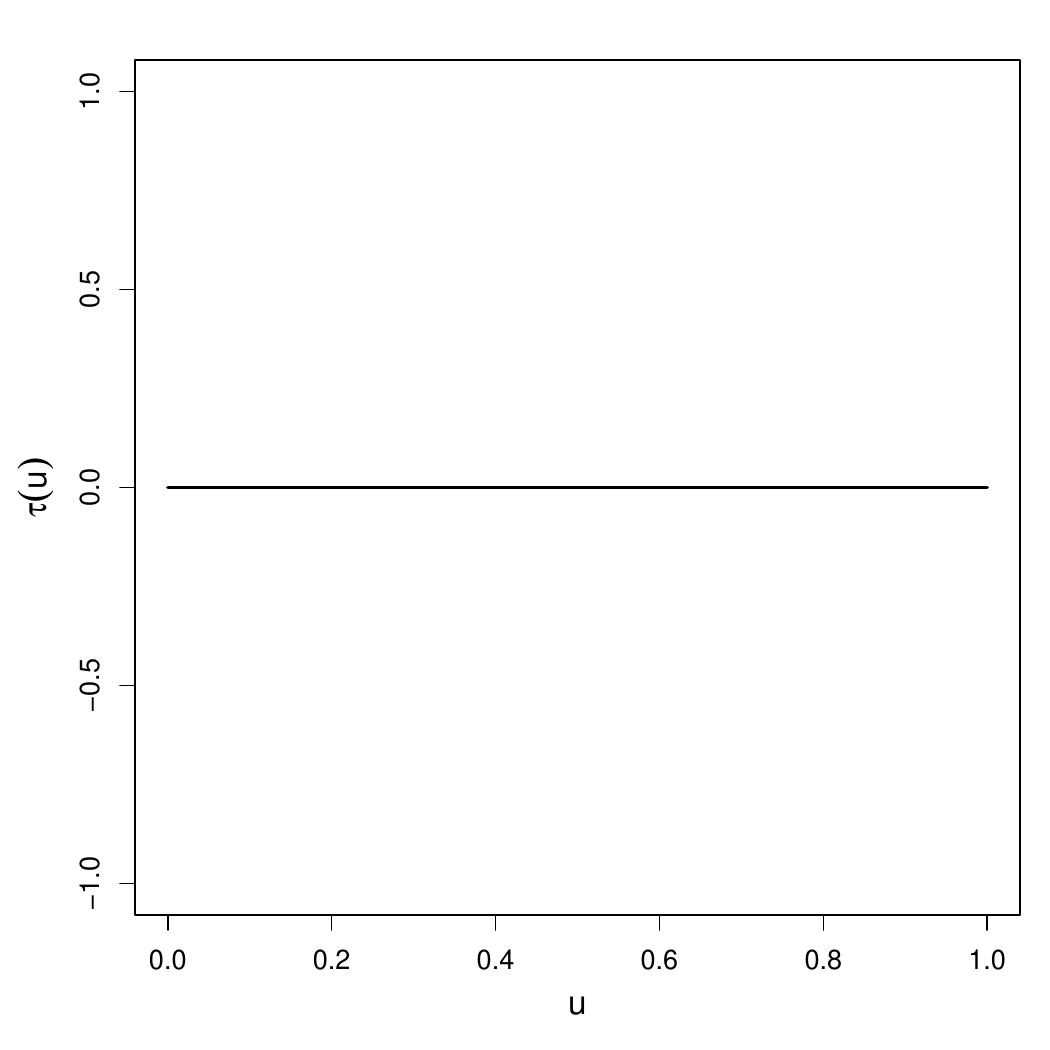}
    \end{subfigure}
    \caption{Plot of an illustrative example where the conditional densities individually are of lower smoothness (left), but their difference is of higher smoothness (right).}
    \label{fig:diff_example}
\end{figure}

An intuitive approach to estimate the density difference \eqref{eq:den_diff} is to estimate each density separately and take the difference, e.g.,
\begin{equation}\label{eq:den_diff_kernel}
    \tilde{\tau}(u) \coloneqq \tilde{p}_1(u) - \tilde{p}_0(u) = \frac{1}{n}\sum_{i=1}^n\left(\frac{A_i}{\overline{A}}-\frac{1-A_i}{1-\overline{A}}\right)K_h(Q_i),
\end{equation}
where $\overline{A} \coloneqq \sum_{i=1}^n A_i/n$. The following proposition shows that the kernel density difference estimator \eqref{eq:den_diff_kernel} achieves the optimal smoothing bias of order $h^\gamma$ using an appropriate higher-order kernel.

\begin{proposition}\label{prop:den_diff}
 Assume that
 \begin{enumerate} 
     \item $p_a$ is $s$-smooth for $a=0,1$,
     \item 
     \begin{enumerate}
         \item $\tau$ is $\gamma$-smooth,
         \item $\norm{\tau}_\infty \leq \tau_{max}$, \label{cond:tau_bdd}
         \item $|\tau(u) - \tau(u')| \leq L_{\tau}|u-u'|^\alpha$ for some $0 < \alpha < 1$ and for all $u,u' \in [-1,1]$, \label{cond:tau_holder} 
     \end{enumerate}
     \item $\epsilon \leq \sum_{i=1}^n A_i/n \leq 1-\epsilon$ for some $\epsilon > 0$ with probability 1,\label{cond:positivity}
    \item 
    \begin{enumerate}
        \item $\int K(u)du = 1$, 
        \item $\int u^jK(u)du = 0$ for $j = 1,\ldots,\floor{\gamma}$,
        \item $\int |u|^{\gamma}|K(u)|du \leq B < \infty$, and
        \item  $\int K(u)^2du \leq B' < \infty$. \label{cond:kernel_sq_int}
    \end{enumerate}
 \end{enumerate}
Then for $u \in (0,1)$, the pointwise bias and variance of the kernel density difference estimator \eqref{eq:den_diff_kernel} are bounded as
\begin{equation*}
    |\E\{\tilde\tau(u)\} - \tau(u)| \leq \frac{LB}{\floor{\gamma}!}h^\gamma \quad \text{and} \quad \var\{\tilde\tau(u)\} \leq \frac{1}{nh}\bigg(\frac{p_{\max}B'}{\epsilon^2}\bigg).
\end{equation*}
\end{proposition}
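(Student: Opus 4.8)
The plan is to condition on the group indicators $A_1,\dots,A_n$, which neutralizes the random self‑normalizing denominators $\overline A$ and $1-\overline A$ sitting inside the estimator. Write $\tilde\tau(u)=\frac1n\sum_{i=1}^n W_i$ with $W_i=(A_i/\overline A-(1-A_i)/(1-\overline A))\,K_h(Q_i)$. Given $\mathcal A\coloneqq\sigma(A_1,\dots,A_n)$, the fraction $\overline A$ is a fixed number lying in $[\epsilon,1-\epsilon]$ almost surely by Condition~\ref{cond:positivity}, while the $Q_i$ are conditionally independent with $Q_i\sim p_{A_i}$. Taking conditional expectations and splitting the index set by the value of $A_i$, the inner expectation is $\bar p_{1,h}(u)\coloneqq\int K_h(q)p_1(q)\,dq$ on $\{A_i=1\}$ and $\bar p_{0,h}(u)$ on $\{A_i=0\}$; since $\frac1n\sum_i A_i/\overline A=1$ and $\frac1n\sum_i(1-A_i)/(1-\overline A)=1$, the normalized weights collapse and $\E\{\tilde\tau(u)\mid\mathcal A\}=\bar p_{1,h}(u)-\bar p_{0,h}(u)$, which is nonrandom. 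Hence by the tower rule $\E\{\tilde\tau(u)\}=\bar p_{1,h}(u)-\bar p_{0,h}(u)=\int K_h(q)\tau(q)\,dq=\int K(v)\,\tau(u+hv)\,dv$ after the change of variables $v=(q-u)/h$.

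For the bias I would then run the standard higher‑order‑kernel Taylor argument. Writing $\tau(u)=\tau(u)\int K(v)\,dv$ and expanding $\tau(u+hv)$ about $u$ to order $\floor{\gamma}$, the degree‑$0$ term cancels $\tau(u)$, the terms of degrees $1,\dots,\floor{\gamma}$ integrate to zero against $K$ by the vanishing‑moment assumptions, and the Lagrange remainder is controlled by the $\gamma$‑smoothness of $\tau$ through $|\tau^{(\floor{\gamma})}(u+\theta hv)-\tau^{(\floor{\gamma})}(u)|\le L|hv|^{\gamma-\floor{\gamma}}$ for some $\theta\in(0,1)$. This yields $|\E\{\tilde\tau(u)\}-\tau(u)|\le\frac{L}{\floor{\gamma}!}h^\gamma\int|v|^\gamma|K(v)|\,dv\le\frac{LB}{\floor{\gamma}!}h^\gamma$, using $\int|v|^\gamma|K(v)|\,dv\le B$; see \cite{tsybakov2008introduction} for this computation. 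Only the assumptions on $\tau$ and on $K$ enter here.

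For the variance I would use the law of total variance. The between‑groups term $\var[\E\{\tilde\tau(u)\mid\mathcal A\}]$ vanishes because the conditional mean is constant in $\mathcal A$. For the within term, conditional independence gives $\var\{\tilde\tau(u)\mid\mathcal A\}=\frac1{n^2}\sum_i(A_i/\overline A-(1-A_i)/(1-\overline A))^2\,\var\{K_h(Q_i)\mid A_i\}$, and each conditional variance is bounded by a second moment: $\var\{K_h(Q_i)\mid A_i=a\}\le\E\{K_h(Q_i)^2\mid A_i=a\}=\frac1h\int K(v)^2 p_a(u+hv)\,dv\le p_{\max}B'/h$, using $\int K(v)^2\,dv\le B'$ (Condition~\ref{cond:kernel_sq_int}) and boundedness of $p_a$. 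Splitting the sum over $\{A_i=1\}$ and $\{A_i=0\}$ and using $\sum_i A_i=n\overline A$ gives $\var\{\tilde\tau(u)\mid\mathcal A\}\le\frac{p_{\max}B'}{nh}\cdot\frac1{\overline A(1-\overline A)}\le\frac1{nh}(p_{\max}B'/\epsilon^2)$ since $\overline A\ge\epsilon$ and $1-\overline A\ge\epsilon$ almost surely. Taking expectation over $\mathcal A$ finishes the bound.

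The only genuine (and mild) obstacle is the random, self‑normalizing denominator $\overline A$: a priori it couples the summands and places a random quantity in the denominator of the estimator, so neither the mean nor the variance is immediately a sum of independent contributions. Conditioning on $\mathcal A$ dissolves this, because $\overline A$ then becomes a constant and the normalized weights sum to one, making both the conditional mean and the conditional variance exactly computable; after that, the bias is a textbook higher‑order‑kernel Taylor expansion and the variance a one‑line second‑moment bound combined with the positivity in Condition~\ref{cond:positivity}. One small bookkeeping point worth stating explicitly is that $p_{\max}\coloneqq\max_a\norm{p_a}_\infty$ is finite because each $p_a$, being $s$‑smooth with $s>0$, is bounded by its H\"older constant; the boundedness and H\"older continuity of $\tau$ (Conditions~\ref{cond:tau_bdd}--\ref{cond:tau_holder}) do not appear to be needed for the two bounds exactly as stated.
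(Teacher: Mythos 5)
Your proposal is correct and follows essentially the same route as the paper: condition on $A^n=(A_1,\dots,A_n)$ so that the weights collapse and $\E\{\tilde\tau(u)\mid A^n\}=\int K_h(q)\tau(q)\,dq$, then apply the standard higher-order-kernel Taylor bound for the bias and a conditional second-moment bound combined with $\overline A,\,1-\overline A\ge\epsilon$ for the variance. Your extra remarks (the vanishing between-groups term in the law of total variance, and the observation that Conditions 2(b)--2(c) are not actually used here) are accurate but do not change the argument.
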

Similar results hold for other nonparametric methods. In the context of the orthogonal series estimator
\begin{equation*}
    b(u)^\top \int b(u)\tau(u)du \eqqcolon b(u)^\top\theta,
\end{equation*}
where $b(u) = \{b_1(u),\ldots,b_k(u)\}^\top$ denotes an orthonormal basis of dimension $k$, $\theta$ can be estimated unbiasedly with
\begin{equation*}
    \hat\theta = \frac{1}{n}\sum_{i=1}^n\left(\frac{A_i}{\overline{A}}-\frac{1-A_i}{1-\overline{A}}\right)b(Q_i).
\end{equation*}
Therefore, we would again obtain the optimal bias order $k^{-\gamma}$ and the variance of order $k/n$ under standard assumptions. 

\paragraph{Tuning parameter selection.} 
In practice, performance depends critically on careful tuning. Tuning each density separately is sub-optimal, as will be shown in Section \ref{sec:sim2}. For simplicity, we consider cross-validation with two folds: $D_1^n$ to estimate the density difference and $D_2^n$ to choose the tuning parameter. 
For optimal performance, one should minimize the ``joint" pseudo-risk \citep{wasserman2006all, gyorfi2002distributionfree}
\begin{equation}\label{eq:joint_pseudo_risk}
    \int \tilde{\tau}_h(u)^2du -  \frac{2}{n}\sum_{j \in D_2^{n}} \left(\frac{A_j}{\overline{A}}-\frac{1-A_j}{1-\overline{A}}\right)\tilde{\tau}_h(Q_j),
\end{equation} 
where the average $\overline{A}$ is taken over $D_2^n$. The joint pseudo‑risk arises from decomposing the integrated squared error of the kernel density difference estimator
\begin{equation*}
    \int \tilde\tau(u)^2du -2 \int \tilde\tau(u)\tau(u)du + \int \tau(u)^2du,
\end{equation*}
and estimating the cross term $\int \tilde\tau(u)\tau(u)du$ by averaging $\tilde\tau(Q_j)$ over the second fold and weighting by $A_j/\overline{A} - (1-A_j)/(1-\overline{A})$, thereby turning the mixture average into the difference of group means. Hence, above pseudo-risk can be estimated unbiasedly and standard $L_2$ cross-validation theory (e.g., Theorem 8.1 of \cite{gyorfi2002distributionfree}) applies directly, subsequently yielding minimax optimal estimators. Thus, we would obtain optimal rates without \emph{a priori} knowledge of the smoothness so long as the set of tuning parameters is sufficiently large. For example, in the context of kernel density estimators, the set of tuning parameters can consist of pairs $(h,\gamma)$ of bandwidths $h$ and kernel orders $\gamma$. 

We emphasize that this idea of joint, or ``single-shot", tuning is well established in the literature \citep{hall1988nonparametric, sugiyama2013density, nguyen2014constrained}. Our contribution is to clarify that optimal estimation of the density difference in both theory and practice using standard nonparametric methods is \emph{feasible}, contrary to the conjecture that it may be difficult with $\gamma$ being unknown \citep[pg. 9]{sugiyama2013density}.\footnote{ \cite{sugiyama2013density} further discourages the use of higher-order kernels as they can be nonnegative, but negative density estimates can always be replaced with $0$ and the risk will never become worse.} 
See Section \ref{sec:sim2} for a simulation study that demonstrates the performance of the estimator \eqref{eq:den_diff_kernel} tuned with the joint pseudo-risk \eqref{eq:joint_pseudo_risk}.

\paragraph{Empirical proportions.} We now return to our main setting of interest where we observe independent data $(X_1, t_1, A_1),\ldots,(X_n, t_n, A_n)$.
Consider 
\begin{equation}\label{eq:den_diff_kernel_het}
    \hat{\tau}(u) \coloneqq \frac{1}{n}\sum_{i=1}^n\left(\frac{A_i}{\overline{A}}-\frac{1-A_i}{1-\overline{A}}\right)K_h(X_i/t_i),
\end{equation}
i.e., the empirical proportions version of the kernel density difference estimator \eqref{eq:den_diff_kernel}. The following corollary characterizes its error bounds.
\begin{corollary}\label{cor:den_diff_het}
    Assume that
     \begin{enumerate} 
     \item $p_a$ is $s$-smooth for $a=0,1$,
     \item 
     \begin{enumerate}
         \item $\tau$ is $\gamma$-smooth,
         \item $\norm{\tau}_\infty \leq \tau_{max}$,
         \item $|\tau(u) - \tau(u')| \leq L_{\tau}|u-u'|^\alpha$ for some $0 < \alpha < 1$ and for all $u,u' \in [-1,1]$, 
     \end{enumerate}
     \item $\epsilon \leq \sum_{i=1}^n A_i/n \leq 1-\epsilon$ for some $\epsilon > 0$ with probability 1,
    \item 
    \begin{enumerate}
        \item $|K(v)| \leq K_{max}\ind(|v| \leq 1)$,
        \item $|K(v)-K(v')| \leq M|v-v'|^{\beta}$ for some $0 < \beta \leq 1$ and for all $v,v' \in [-1,1]$,
        \item $\int K(u)du = 1$, 
        \item $\int u^jK(u)du = 0$ for $j = 1,\ldots,\floor{\gamma}$, and
        \item $\int |u|^{\gamma}|K(u)|du \leq B < \infty$.
    \end{enumerate}
 \end{enumerate}
    Then for $u \in (0,1)$, the pointwise bias of the kernel density difference estimator constructed from empirical proportions \eqref{eq:den_diff_kernel_het} is bounded as
    \begin{equation*}
        |\E\{\hat{\tau}(u)\}-\tau(u)| \leq 
        \frac{LB}{\floor{\gamma}!}h^\gamma
        + \frac{1}{n}\sum_{i=1}^n \bigg\{g_{\tau}(t_i,\alpha) \sum_{x=0}^{t_i}\frac{|K_h(x/t_i)|}{t_i+1} + r_{\tau}(h, t_i, \alpha, \beta)\bigg\},
    \end{equation*}
    where 
    \begin{equation*}
    g_{\tau}(t_i,\alpha) \coloneqq L_{\tau}\left(\frac{1/4}{t_i+3}\right)^{\alpha/2} + L_{\tau}\left(\frac{1}{t_i+2}\right)^\alpha + \frac{\tau_{\max}}{t_i},
    \end{equation*}
    and
    \begin{equation*}
        r_{\tau}(h, t_i, \alpha, \beta) \coloneqq \frac{K_{\max}\tau_{\max}}{ht_i} + \left(2+\frac{1}{ht_i}\right)\left\{L_{\tau}K_{\max}\left(\frac{1}{t_i}\right)^\alpha + 2M\tau_{\max}\left(\frac{1}{ht_i}\right)^\beta \right\}.
    \end{equation*}
    Further, the pointwise variance is bounded as
    \begin{equation*}
        \var\{\hat{\tau}(u)\} \leq \frac{K^2_{\max}\tau_{\max}}{nh\epsilon^2}\bigg(2+\frac{1}{h\widetilde{t}}\bigg).
    \end{equation*}
\end{corollary}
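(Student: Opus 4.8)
The plan is to adapt the proof of Theorem~\ref{thm:bound_on_error_kernel_het}, applied group by group, while combining the two groups' \emph{smoothing} biases \emph{before} bounding, so that the faster $h^\gamma$ rate (rather than $h^s$) emerges. First I would condition on $A_1,\ldots,A_n$ throughout: this makes the weights $A_i/\overline{A}$ and $(1-A_i)/(1-\overline{A})$ deterministic, and by the positivity condition each is bounded in absolute value by $1/\epsilon$; any bound derived conditionally whose form does not depend on the realized $A_i$'s then holds unconditionally. Writing $\hat{\tau} = \hat{p}_1 - \hat{p}_0$ with $\hat{p}_a(u) = n_a^{-1}\sum_{i:A_i=a}K_h(X_i/t_i)$ and $n_a = \sum_i \ind(A_i=a)$, the Beta-function identity recorded after Lemma~\ref{lem:bound_on_error} gives $\E\{K_h(X_i/t_i)\mid A_i=a\} = \int K_h(q)p_a(q)\,dq + \cE(f_i)$ with $f=K_h$ and $p$ replaced by $p_a$, so that $\E\{\hat{\tau}(u)\mid A_{1:n}\} = \big\{\int K_h(q)p_1(q)\,dq - \int K_h(q)p_0(q)\,dq\big\} + \big\{n_1^{-1}\sum_{A_i=1}\cE(f_i) - n_0^{-1}\sum_{A_i=0}\cE(f_i)\big\}$, the Bernstein errors in the two braces being taken with respect to $p_1$ and $p_0$, respectively.

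The key step for the bias is the first brace: $\int K_h(q)p_1(q)\,dq - \int K_h(q)p_0(q)\,dq = \int K_h(q)\tau(q)\,dq$, and then $\int K_h(q)\tau(q)\,dq - \tau(u) = \int K(v)\{\tau(u+hv)-\tau(u)\}\,dv$ is the usual smoothing bias of the $\gamma$-smooth function $\tau$. Taylor-expanding $\tau$ at $u$ to order $\floor{\gamma}$, the moment conditions $\int v^jK(v)\,dv=0$ for $j=1,\ldots,\floor{\gamma}$ annihilate the lower-order terms, and the $\gamma$-H\"older bound on $\partial^{\floor{\gamma}}\tau$ together with $\int |v|^\gamma|K(v)|\,dv\le B$ controls the remainder by $\tfrac{LB}{\floor{\gamma}!}h^\gamma$. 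Performing this cancellation before bounding is essential: invoking Theorem~\ref{thm:bound_on_error_kernel_het} directly on $\hat{p}_1$ and $\hat{p}_0$ would only yield $h^s$.

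For the second brace (the Bernstein remainder) I would invoke Lemma~\ref{lem:bound_on_error} and the quasi-Riemann sum bound $r(\cdot)$ from the proof of Theorem~\ref{thm:bound_on_error_kernel_het}, noting that both arguments use only boundedness, H\"older continuity, and smoothness of the weighting function and never its nonnegativity, so they apply to $\tau$ and to each $p_a$ alike; this produces the per-$i$ bound $g_\tau(t_i,\alpha)\sum_{x=0}^{t_i}|K_h(x/t_i)|/(t_i+1) + r_\tau(h,t_i,\alpha,\beta)$ after reconciling $p_a$- and $\tau$-constants (which agree in order under the smoothness hypotheses). Collapsing the two within-group averages onto the full-sample average $n^{-1}\sum_{i=1}^n$ uses $n_a = n\overline{A}\ge n\epsilon$ together with nonnegativity of the per-$i$ bounds. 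For the variance, conditioning on $A_{1:n}$ makes $\hat{p}_1$ and $\hat{p}_0$ independent (disjoint subsamples), so $\var\{\hat{\tau}\mid A_{1:n}\} = \var\{\hat{p}_1\mid A_{1:n}\} + \var\{\hat{p}_0\mid A_{1:n}\}$; within each group, independence across $i$ gives $\var\{\hat{p}_a\mid A_{1:n}\}\le n_a^{-2}\sum_{A_i=a}\E\{K_h^2(X_i/t_i)\mid A_i=a\}$, and I would reuse the bound $\E\{K_h^2(X_i/t_i)\}\le K_{\max}^2\norm{p_a}_\infty h^{-1}(2+(ht_i)^{-1})$ from the proof of Theorem~\ref{thm:bound_on_error_kernel_het}; bounding $n_a\ge n\epsilon$, $\norm{p_a}_\infty\le \tau_{\max}$, and the within-group average of $t_i^{-1}$ by $(\epsilon\widetilde{t})^{-1}$ then delivers $\tfrac{K_{\max}^2\tau_{\max}}{nh\epsilon^2}(2+(h\widetilde{t})^{-1})$, which is deterministic and hence the unconditional bound.

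The main obstacle is the bookkeeping needed to funnel everything through the \emph{global} harmonic mean $\widetilde{t}$ and the single positivity constant $\epsilon$: each of $\hat{p}_1,\hat{p}_0$ naturally involves only its own group's trial parameters, so one must carefully dominate within-group sums of $t_i^{-1}$, and of the per-$i$ Bernstein bound, by a constant multiple of $n^{-1}$ times the corresponding full-sample quantity, and keep the $p_1$- and $p_0$-smoothing biases together until they have merged into the $\gamma$-smooth bias of $\tau$. Everything else is a routine recycling of the machinery in Lemma~\ref{lem:bound_on_error} and Theorem~\ref{thm:bound_on_error_kernel_het}.
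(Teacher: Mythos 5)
Your skeleton is right---condition on $A^n$, merge the two smoothing integrals into $\int K_h(q)\tau(q)\,dq$ \emph{before} bounding so that the $h^\gamma$ rate appears, and recycle Lemma \ref{lem:bound_on_error} and Proposition \ref{prop:bound_on_riemann}---and this is indeed how the paper proceeds (its proof is a one-line reduction to Theorem \ref{thm:bound_on_error_kernel_het} and Proposition \ref{prop:den_diff}). The variance argument is also essentially fine. But your treatment of the Bernstein remainder has a genuine gap. You propose to bound $n_1^{-1}\sum_{A_i=1}\cE(\cdot)$ and $n_0^{-1}\sum_{A_i=0}\cE(\cdot)$ \emph{separately}, each with the constants of its own $p_a$, and then ``reconcile'' these with the $\tau$-constants on the grounds that they ``agree in order under the smoothness hypotheses.'' They do not: take the paper's own running example with $p_1=p_0$, where $\tau\equiv 0$, so $\tau_{\max}=0$ and $L_\tau$ may be taken arbitrarily small while $p_{\max}$ and $L$ are fixed positive constants. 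A group-by-group bound can therefore never produce the stated $g_\tau$ and $r_\tau$ (which carry $\tau_{\max}$ and $L_\tau$), and your ``collapsing'' step via $n_a\ge n\epsilon$ additionally injects a $1/\epsilon$ factor that is absent from the bias bound in the statement.

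The missing idea is to apply to the Bernstein term exactly the cancellation you correctly insist on for the smoothing term: the weights $A_i/\overline{A}-(1-A_i)/(1-\overline{A})$ are designed so that the two within-group averages of the per-unit conditional expectations combine into a single full-sample average of \emph{$\tau$-weighted} Bernstein errors, i.e.\ terms of the form $\int\big[\E\{K_h(X_i/t_i)\mid Q_i=q\}-K_h(q)\big]\tau(q)\,dq$. One then invokes Lemma \ref{lem:bound_on_error} and Proposition \ref{prop:bound_on_riemann} with the weight function $\tau$ in place of $p$---legitimate because, as you yourself observe, those arguments use only boundedness and H\"older continuity of the weight (conditions 2(b)--(c) here), never nonnegativity or integration to one---which yields $g_\tau$ and $r_\tau$ with no $1/\epsilon$. (A residual subtlety, which the paper also glosses over: the per-unit Bernstein functional depends on $t_i$, so equating the two $\overline{A}$-weighted within-group averages with the full-sample average $n^{-1}\sum_{i=1}^n$ is not automatic when the $t_i$ are heterogeneous across groups.) Separately, your step $\norm{p_a}_\infty\le\tau_{\max}$ in the variance argument is not implied by the stated assumptions; the $\tau_{\max}$ appearing in the displayed variance bound is really playing the role of $\max_a\norm{p_a}_\infty$.
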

Interestingly, we obtain a bias bound almost identical to that of Theorem \ref{thm:bound_on_error_kernel_het}, with the key difference that $\gamma$ now replaces $s$. This is because the bounding of the additional bias from empirical proportions here concerns the $\tau$-weighted integrated Bernstein approximation error, for which we can directly leverage Theorem \ref{thm:bound_on_error_kernel_het} and Proposition \ref{prop:den_diff}. As a result, if we further assume mild simplifying conditions akin to the ones in Corollary \ref{cor:bound_on_error_kernel_het}, we would obtain
\begin{equation*}
        |\E\{\hat{\tau}(u)\}-\tau(u)| \lesssim h^{\gamma} + \frac{1}{\sqrt{\widetilde{t}}} + \frac{1}{h\widetilde{t}} \quad \text{and} \quad \var\{\hat{\tau}(u)\} \lesssim \frac{1}{nh},
    \end{equation*}
which implies that the condition on $\widetilde{t}$ to achieve the oracle rate of $n^{-1/(2+1/\gamma)}$ is
\begin{equation*}
    \widetilde{t} \geq \begin{cases}
        n^{(1+1/\gamma)/(2+1/\gamma)}, & \gamma < 1, \\
        n^{2/(2+1/\gamma)}, & \gamma \geq 1.
    \end{cases}
\end{equation*}   
For tuning parameter selection under empirical proportions, we can again exploit Lepski's method, and analogous guarantees will hold.

\section{Simulation Studies}\label{sec:simulation}
In this section, we perform simulation studies to validate our theoretical results. 
\subsection{Leveraging Trial Heterogeneity}\label{sec:sim1}
Our first simulation quantifies the benefit of leveraging heterogeneous trial sizes by comparing the KDE based on empirical proportions \eqref{eq:kde} with a hypothetical `clipped’ estimator based on the minimum trial size.

The data generating process (DGP) is as follows. First, we draw the true proportions $Q_i \sim p$ where $p(u) = 6u(1-u)\ind(0 \leq u \leq 1)$ (the Beta$(2,2)$ density). 
Next, for a target harmonic mean $\widetilde{t}$, we sample $t_i \sim \textnormal{Poisson}(\widetilde{t}$) then set $t_1 = t_{\min} \coloneqq 0.2\widetilde{t}$. 
Then, we choose a random index $j \in \{2,\ldots,n\}$ and adjust $t_j$ up or down by an adaptive amount until the harmonic mean of $\{t_i\}_{i=1}^n$ is within $0.5$ of the target $\widetilde{t}$. Finally,
we generate the observed counts of ``successes" $X_i \sim \Bin(t_i, Q_i)$ and $Y_i \sim \Bin(t_{\min}, Q_i)$. 

We compare the KDE estimated using the empirical proportions \eqref{eq:kde} (``KDE") with a hypothetical baseline that forces trial homogeneity by replacing each $t_i$ with $t_{\min}$, $\hat{p}_h^{t_{\min}} = \sum_{i=1}^n K_h(Y_i/t_{\min})/n$ (``Clipped"). The estimator $\hat{p}_h^{t_{\min}}$ is therefore a thought experiment. It asks: ``\emph{What if we had ignored the extra Bernoulli outcomes already collected?}" For a fair comparison, both estimators use the Epanechnikov kernel and the same bandwidth $h=n^{-1/5}$. We evaluate the pointwise bias and empirical standard error at $u=0.5$ of each estimator for target harmonic means $\widetilde{t} \in \{30,35,40,\ldots,100\}$, averaged over 1,000 replications of the DGP.

\begin{figure}[ht!]
    \centering
    \begin{subfigure}[b]{0.48\textwidth}
        \centering
        \includegraphics[width=\textwidth]{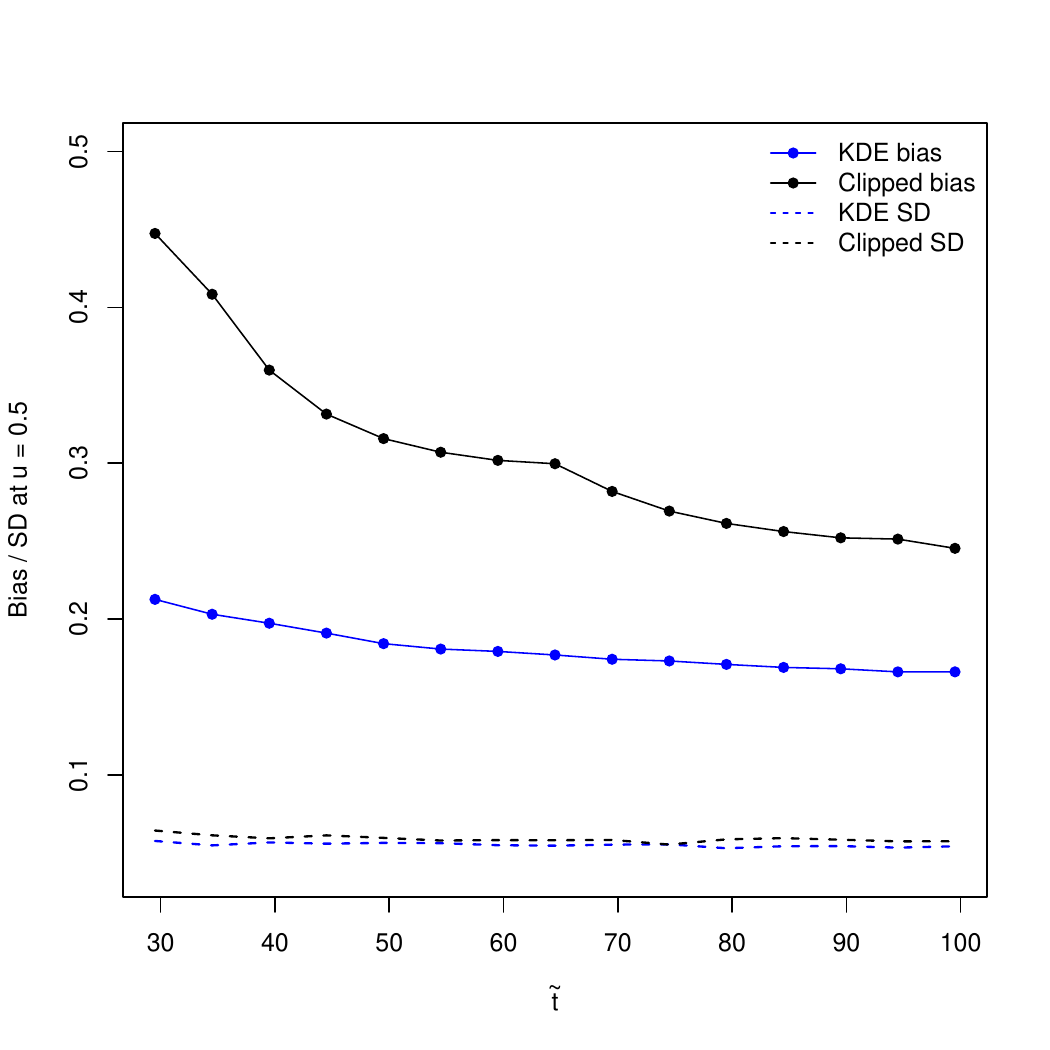}
        \caption{$n=200$.}
    \end{subfigure}
    \hfill
        \begin{subfigure}[b]{0.48\textwidth}
        \centering
        \includegraphics[width=\textwidth]{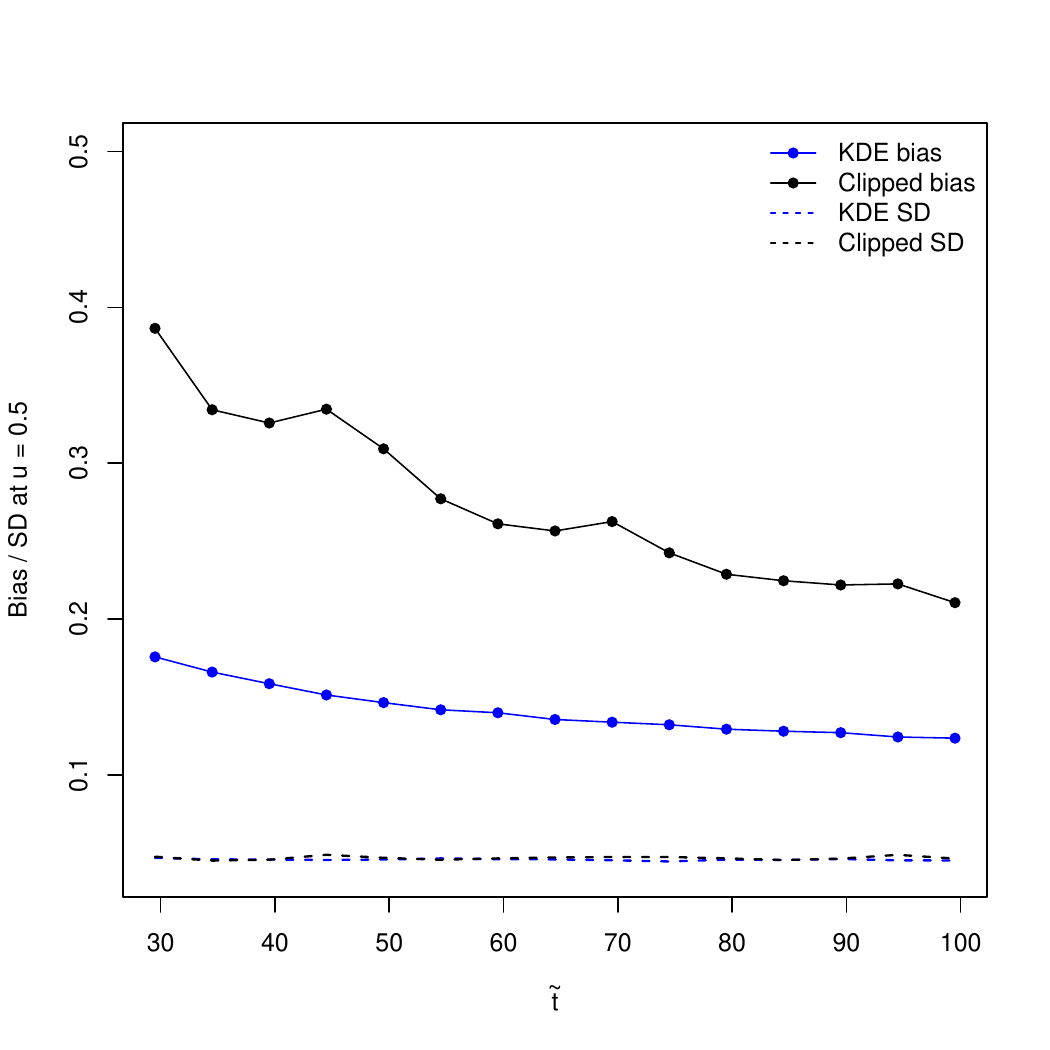}
        \caption{$n=500$.}
    \end{subfigure}
    \caption{Pointwise bias and empirical standard error of the kernel density estimates.}
    \label{fig:sim1}
\end{figure}

Figure \ref{fig:sim1} displays the results. Notably, KDE \eqref{eq:kde} achieves much lower pointwise biases than the clipped estimator across all $\widetilde{t}$s, while having virtually identical standard errors. 
The difference in performance is most pronounced when the harmonic mean (and therefore $t_{\min}$) is small, which implies that the gains from fully exploiting heterogeneous trials can be large especially when trial sizes are limited.

\subsection{Joint Tuning for Density Difference}\label{sec:sim2}
Next, we demonstrate the performance of the kernel density difference estimator \eqref{eq:den_diff_kernel} tuned with the joint pseudo-risk \eqref{eq:joint_pseudo_risk}. We revisit the nonsmooth density example \eqref{eq:nonsmooth_density} where 
\begin{equation*}
p_a(u) = \frac{1}{C} 
\begin{cases}
    2u^2 + 2u, & 0 \leq u \leq 0.25, \\
    u, & 0.25 < u \leq 0.5, \\
    -u^2 + 2u + 0.2, & 0.5 < u \leq 0.75, \\
    1.5u - 1, &0.75 < u \leq 1,
\end{cases}
\end{equation*}
for $a=0,1$ and therefore the density difference is $0$. 

We compare three different procedures: The first is the kernel density difference estimator \eqref{eq:den_diff_kernel} under joint tuning (``Joint"), the second the same estimator under separate tuning (``Separate"), and the third the least-squares density difference estimator of \cite{sugiyama2013density} (``LSDD"), which minimizes a regularized least-squares objective in a Gaussian reproducing kernel Hilbert space and is known to attain optimal rates. For kernel difference methods, we exploit higher-order kernels constructed via Legendre polynomials \citep{tsybakov2008introduction}, tuning the kernel order $\gamma \in \{2,4,6,8\}$. All methods share the same bandwidth grid $h \in \{0.2. 0.3, \ldots, 2\}$. We compare the pointwise bias and empirical standard error evaluated at $u=0.5$ for sample sizes $n \in \{100, 200, \ldots, 2000\}$, averaged over 1,000 replications of the DGP.

\begin{figure}[ht!]
        \centering        \includegraphics[width=0.7\textwidth]{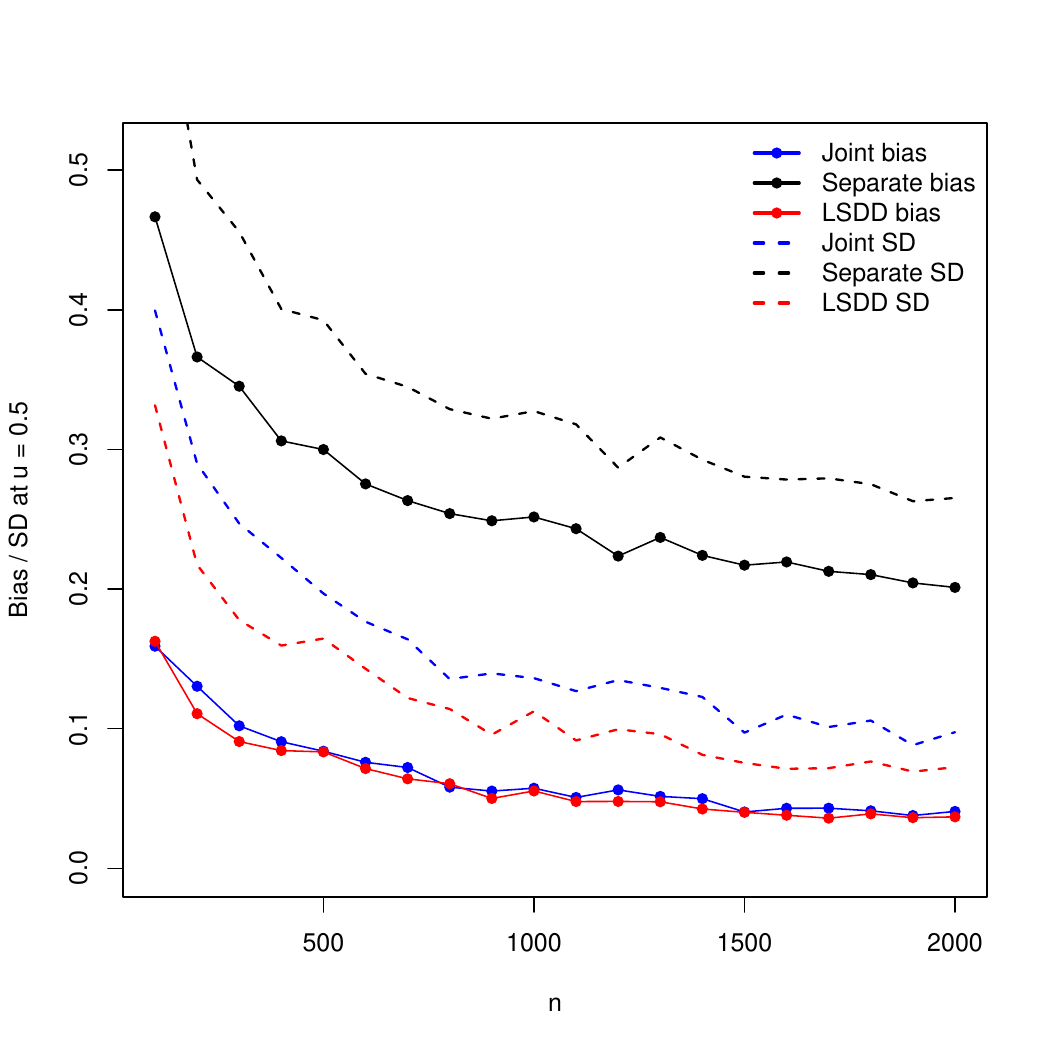}
    \caption{Comparison of different procedures for estimating the density difference $\tau(u) = 0$.}
    \label{fig:sim2}
\end{figure}

Figure \ref{fig:sim2} displays the results. Note that the kernel estimator with joint tuning exhibits performance similar to those of LSDD for all $n$. We expect similar findings for density difference estimators based on other nonparametric methods (e.g., orthogonal series).

\section{Application: Conviction Rates by Attorney Type} \label{sec:application}
Granting equal access to competent legal representation, as guaranteed by the Sixth Amendment, is central to achieving criminal justice. However, the American system for indigent representation often falls short, even though up to 80\% of criminal defendants are assigned a public defender or a court-appointed private attorney, also known as an \emph{appointed counsel}, because they cannot afford an attorney themselves \citep{pollitzworden2014criminal}. The case of Surrency \citep{bach2009ordinary}, an appointed counsel in Georgia known for his ``efficient" approach of negotiating plea deals, well illustrates the problem -- 
many of his clients, as a result of his tactic, plead guilty without realizing they could go to jail. In contrast, salaried public defenders are often better motivated and resourced, although they too must contend with excessively high caseloads \citep{bacak2024stress, bacak2020fighting, welch2019sources}. Yet, defendants rarely know which system will handle their case, let alone have any choice in that matter. 

Appointed counsel are typically assigned on an ad hoc, case-by-case basis, and usually paid under flat fee contracts \citep{spangenbert1995indigent, neubauer2015america}. Accordingly, they come from diverse legal backgrounds and have an incentive to dispose of cases quickly, unlike salaried public defenders. As a result, assigned counsel systems ``have been criticized for appointing attorneys with inadequate skills, experience, and qualifications to represent indigent defendants." \citep{cohen2014better}. Hence, a natural question that arises is: \emph{Are appointed counsel associated with worse case outcomes than public defenders?} 

Relevant empirical literature \citep{anderson2011difference, cohen2014better, roach2014indigent} suggest that appointed counsel are indeed associated with less favorable case outcomes, but their evidence is based on limited data. Many studies \citep{cohen2014better, roach2014indigent} base their analyses on the State Court Processing Statistics (SCPS) from the Bureau of Justice Statistics, which only covers felony cases in 75 most populous counties in the U.S. Others not relying on SCPS \citep{anderson2011difference} still primarily analyze data from large counties. 

We therefore leverage novel data from the Measures for Justice (MFJ), a nonprofit whose mission is to tackle this ``criminal justice data crisis". Our analysis centers on Pennsylvania, where court-related decisions around indigent defense are decentralized and administered by the counties, unlike states that have a state-wide public defender office. Variation across counties is thus significant and meaningful, making counties as the most appropriate units of analysis. Whereas SCPS only covers Allegheny, Philadelphia, and Montgomery, MFJ provides data for all 67 counties and includes cases beyond felonies. 

Specifically, MFJ supplies county-level counts of convictions and total cases by attorney type during 2009--2013. To address our question, we estimate the densities of conviction rates by attorney type, which is a binomial mixing density estimation problem where a conviction is treated as a ``success". The extreme variation in case counts by county and attorney type -- ranging from 1 to 24,740 for appointed counsel, 154 to 117,282 for public defenders, and 54 to 39,295 for private attorneys -- motivates studying the binomial mixing density estimation problem under trial heterogeneity.

Figure \ref{fig:conv} displays the kernel density estimates of the conviction rates by attorney type, with 95\% pointwise intervals obtained via undersmoothing. Bandwidths are selected by Lepski's method using the \texttt{goldenshluger\_lepski()} function from R package $\texttt{KDE}$, implemented based on \cite{goldenshluger2011bandwidth}. 
The vertical dashed lines mark the means. Estimated conviction rates for appointed counsel are generally highest ($\text{mean}=0.83$), next highest for public defenders ($\text{mean}=0.77$), and lowest for private attorneys mean of ($\text{mean}=0.68$). At first glance, the pattern reported in earlier SCPS-based studies appears to extend beyond the largest counties and felony cases.

\begin{figure}[ht]
    \centering
    \includegraphics[width=0.7\linewidth]{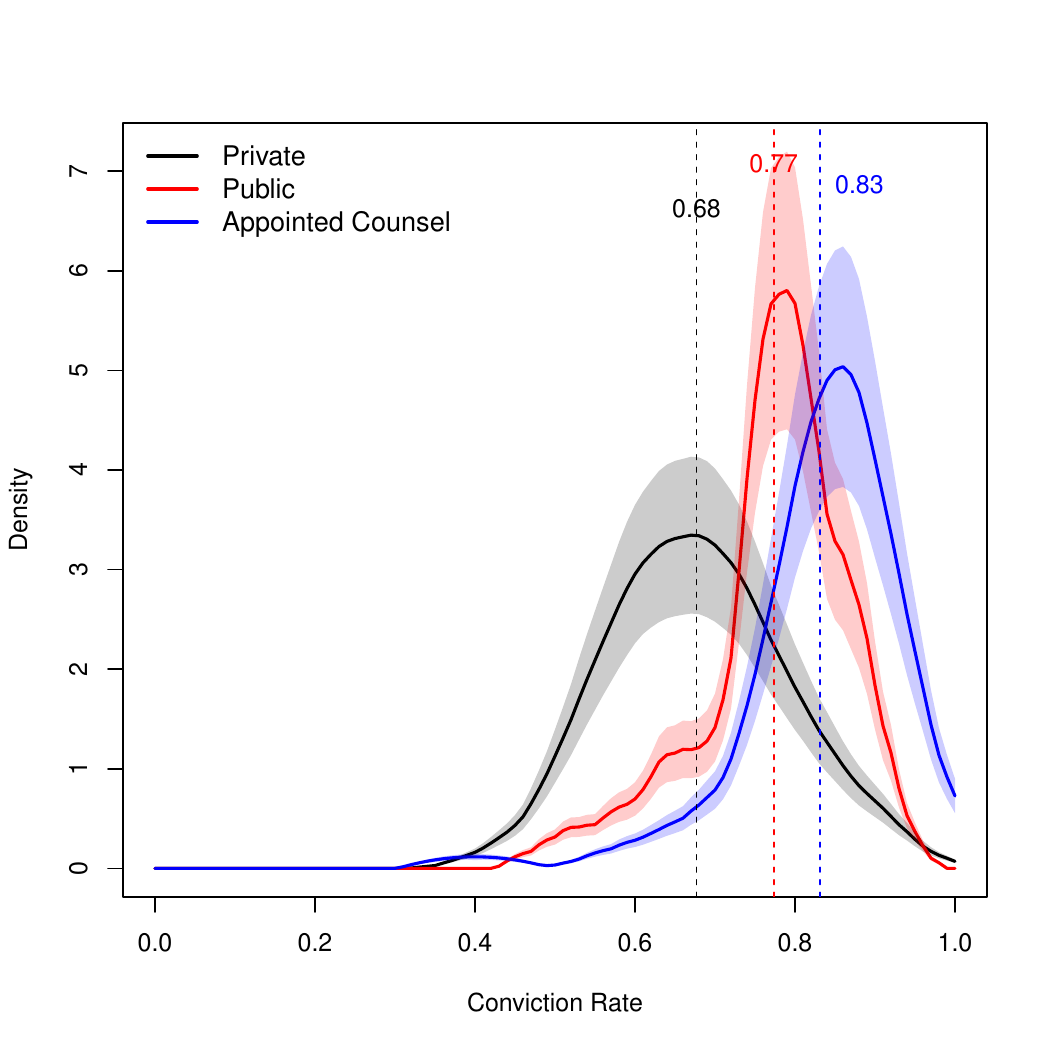}
    \caption{Kernel density estimates of conviction rates by attorney type.}
    \label{fig:conv}
\end{figure}

\paragraph{Potential Confounding.} Are appointed counsel actually less effective at getting clients acquitted than public defenders? We note a potential source of \emph{confounding}, which is that appointed counsel more often represent defendants facing more severe charges. Figure \ref{fig:nonv} shows the density estimates for nonviolent misdemeanor and felony cases with two different bail options: release on recognizance (ROR) and cash bail. ROR refers to cases in which the defendant is not required to post any cash bail, but must promise to attend all future court proceedings; it is therefore the least restrictive type of bail. While various factors determine the type of bail set, such as the defendant’s financial status and criminal record, the seriousness of the alleged offense is typically the most important deciding factor. Therefore, cases with ROR likely involve less severe cases than those requiring cash bail. 

\begin{figure}[ht!]
    \centering
    \begin{subfigure}[b]{0.48\textwidth}
        \centering
        \includegraphics[width=\textwidth]{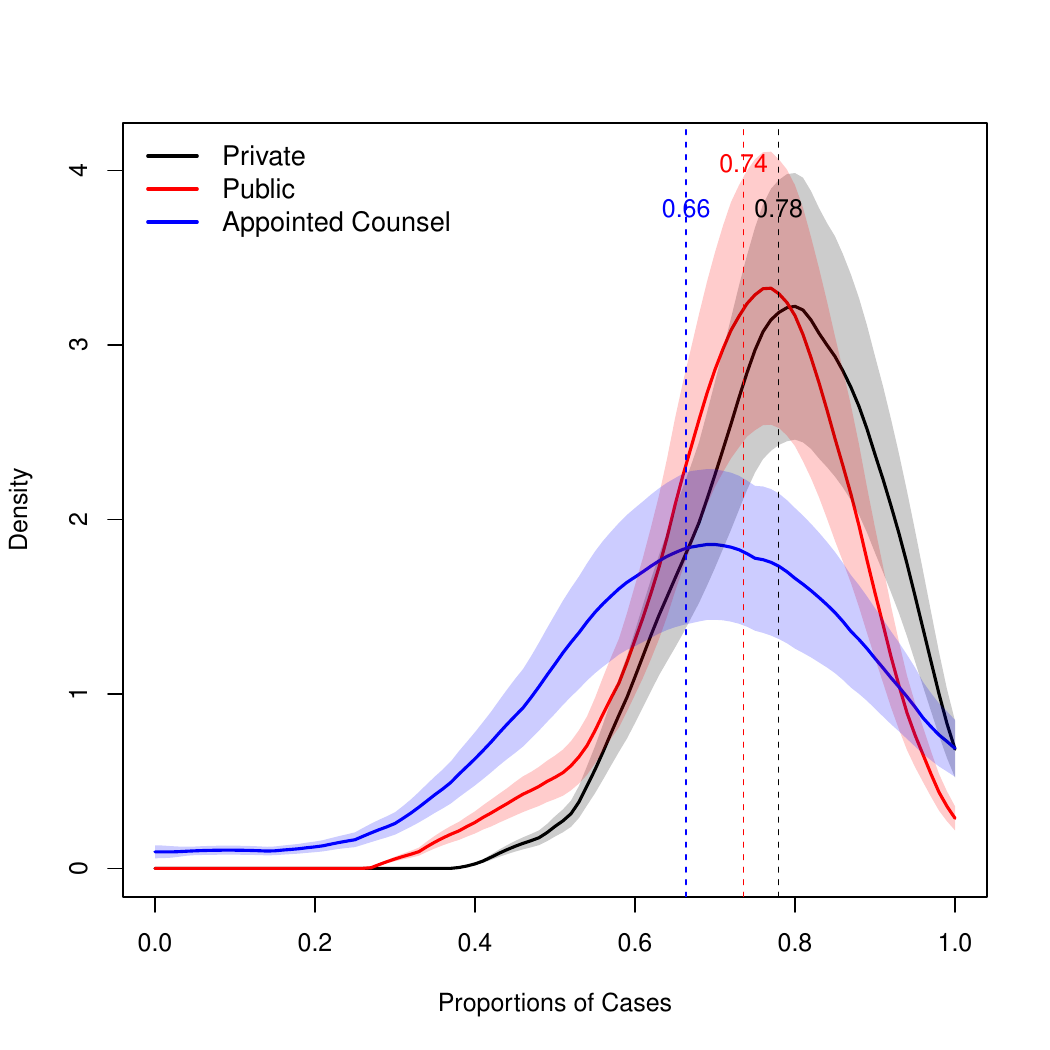}
        \caption{Nonviolent misdemeanor with ROR.}
        \label{fig:nonv_misd_ror}
    \end{subfigure}
    \hfill
    \begin{subfigure}[b]{0.48\textwidth}
        \centering
        \includegraphics[width=\textwidth]{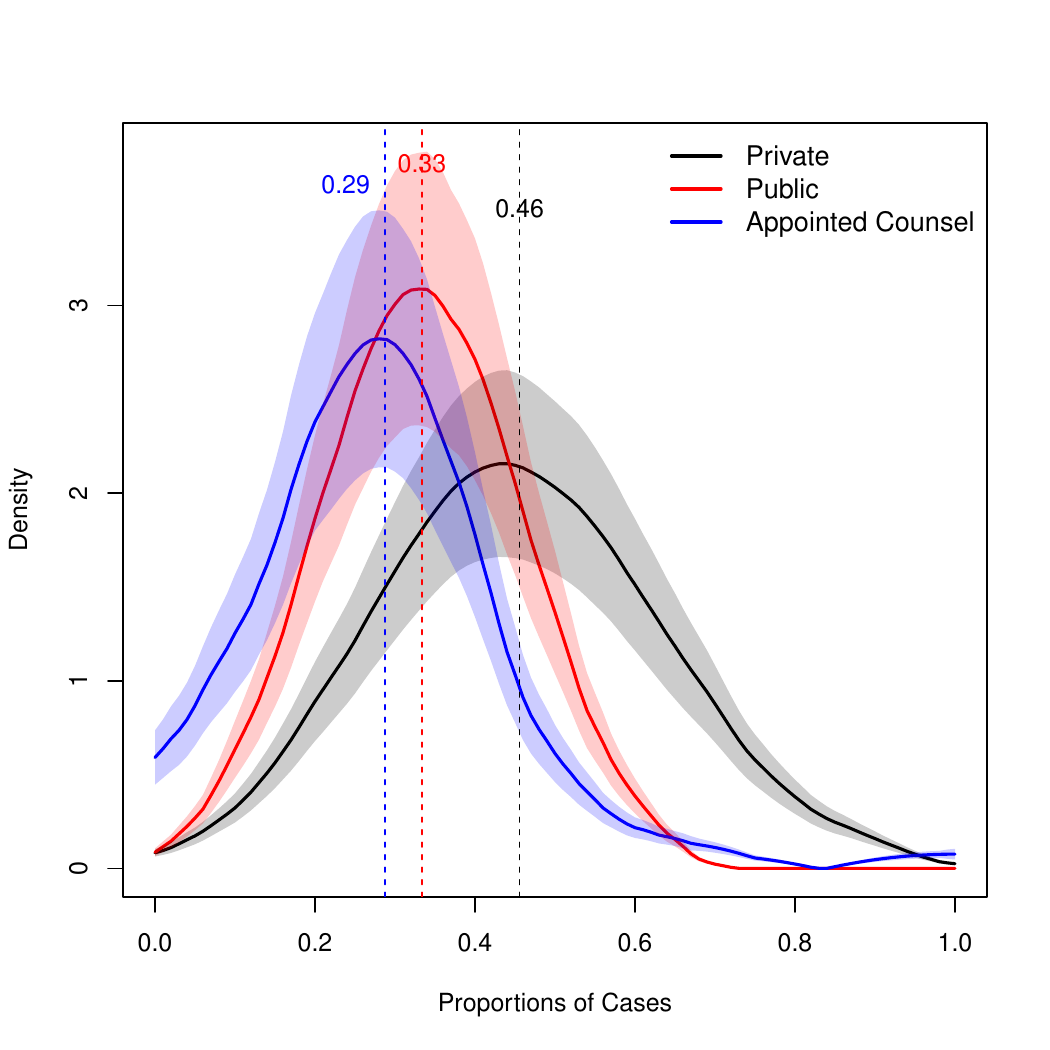}
        \caption{Nonviolent felony with ROR.}
        \label{fig:felony_ror}
    \end{subfigure}
    \vfill
    \begin{subfigure}[b]{0.48\textwidth}
        \centering
        \includegraphics[width=\textwidth]{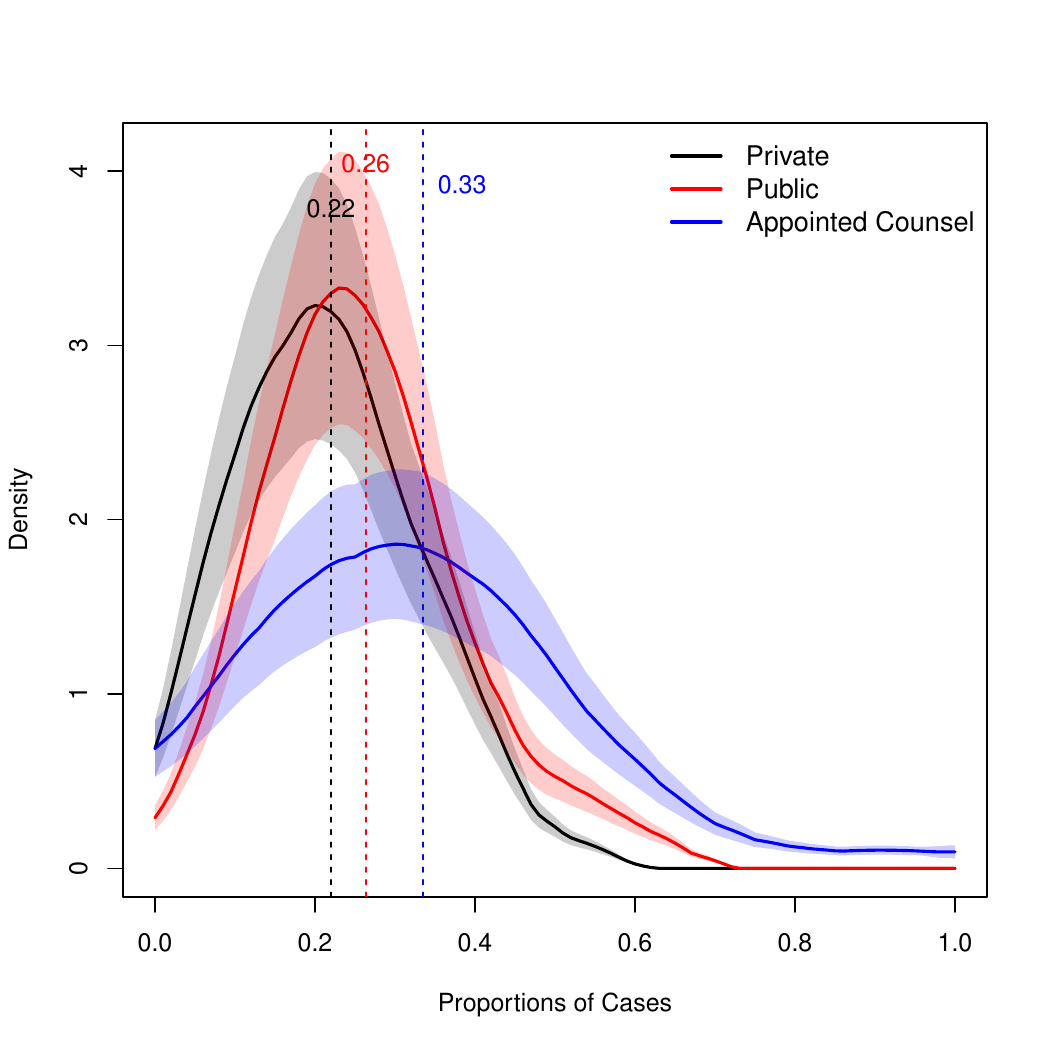}
        \caption{Nonviolent misdemeanor with cash bail.}
        \label{fig:misd_mon_bail}
    \end{subfigure}
    \hfill
    \begin{subfigure}[b]{0.48\textwidth}
        \centering
        \includegraphics[width=\textwidth]{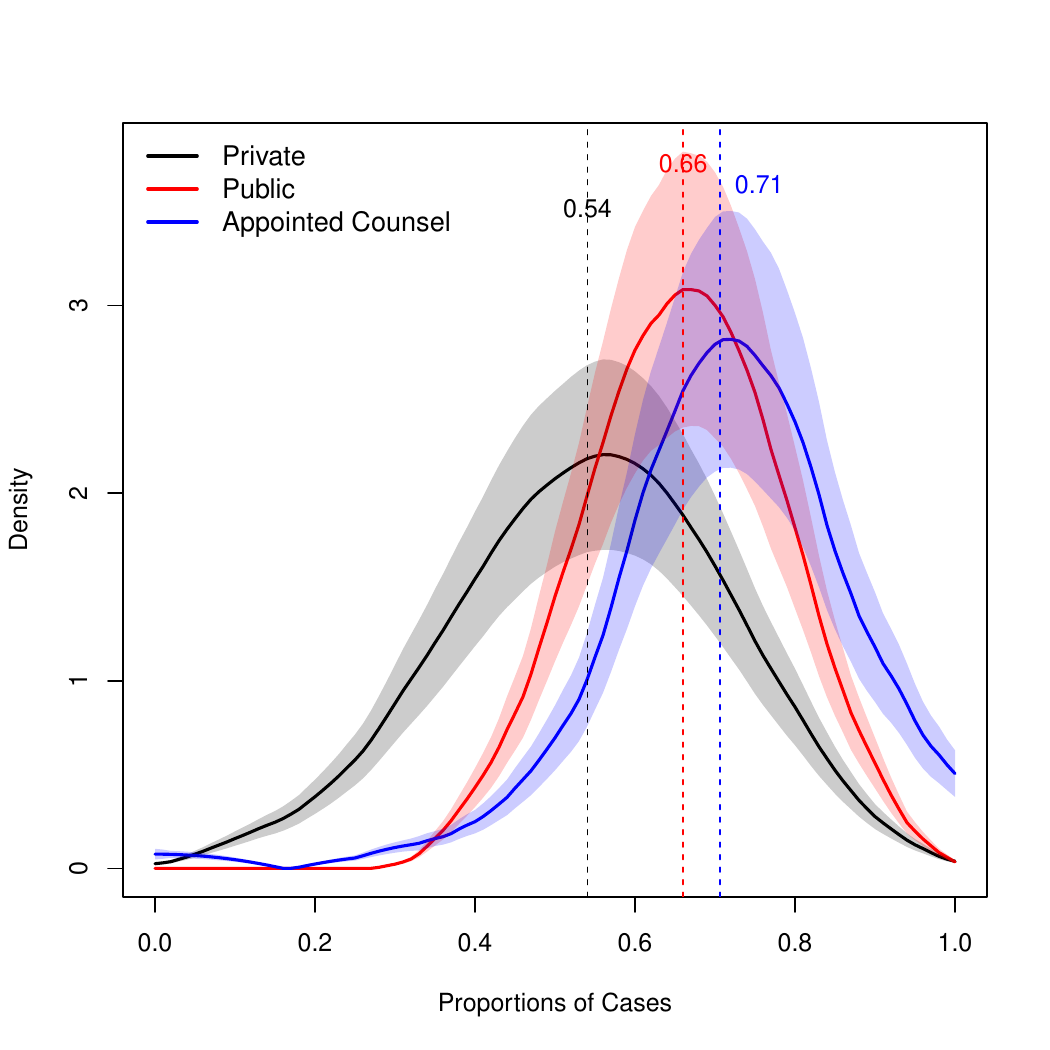}
        \caption{Nonviolent felony with cash bail.}
        \label{fig:felony_mon_bail}
    \end{subfigure}
    \caption{Kernel density estimates for the proportions of nonviolent misdemeanors and felonies by attorney type.}
    \label{fig:nonv}
\end{figure}

We observe that appointed counsel are more frequently involved in cases with cash bail for both misdemeanors and felonies, 
which could have contributed to the observed discrepancies in conviction rates. Therefore, counties should review and \emph{standardize} the largely ad-hoc process of assigning appointed counsel, ensuring more serious cases are not systematically routed to them. 

Future research in this direction should also examine several additional possibilities. First, appointed counsel may take on too many cases in response to the fee-based system, thus unable to devote adequate time and resources to cases. Second, they may also represent cases with higher cash bail if they see the client for the first time after the bail hearing has already been completed, or if ``exogenous fluctuations in the local labor market induce relatively more low-quality attorneys to choose assigned counsel work" \citep{roach2014indigent}. Finally, appointed counsel may prioritize non-indigent clients or lack administrative support to ensure they meet clients early in the case to represent them effectively \citep{anderson2011difference}.

\section{Conclusion}
\label{sec:conclusion}
In this work, we have studied nonparametric estimation of $s$-smooth binomial mixing densities under trial heterogeneity. We have shown that the pointwise bias of the KDE is $\bigO(h^s + 1/\sqrt{\widetilde{t}} + 1/h\widetilde{t})$, where $\widetilde{t}$ denotes the harmonic mean of the trial parameters. When reduced to the homogeneous case, this is faster than the rate in \cite{ye2021binomial}. Additionally, we have studied the difference between two $s$-smooth densities and have shown that one can exploit usual nonparametric density estimators and still achieve optimal performance. Density difference may also prove useful in contexts beyond those studied to date, e.g., in answering complex causal questions if comparing the difference in counterfactual densities \citep{kennedy2023semiparametric}.

Empirically, we have examined the conviction rates by attorney type in Pennsylvania using novel criminal justice data with broader coverage. In particular, we note that the estimates are generally higher for appointed counsel than for public defenders, potentially because appointed counsel are taking on more severe cases at higher rates. As aggregate data are common in criminal justice, the KDE \eqref{eq:kde} can be usefully applied in other contexts, such as comparing private and public prisons \citep{bacak2018availability}. 

There are many intriguing directions for future research. The first is to investigate whether we can obtain better error rates by conducting a sharper analysis or using other estimators. In particular, we believe that the error rate of $h^s + 1/\widetilde{t} + 1/h\widetilde{t}$ may be achievable. Under this rate, the requirement on $\widetilde{t}$ for oracle minimax estimation becomes $\widetilde{t} \gtrsim n^{(1+1/s)/(2+1/s)}$ for any $s>0$ and $\widetilde{t} \geq n^{1/2}$ as $s \to \infty$, which improves on $\widetilde{t} \geq n^{2/3}$.

The second question is uncertainty quantification. 
In our setting, a more informative interval than the one based in undersmoothing would explicitly reflect the number of trials. Intuitively, the interval should contract when the number of trials is large and expand when it is small. One option is a plug-in interval that uses the pointwise bias bound from Theorem \ref{thm:bound_on_error_kernel_het}. However, the constants regarding $p$ are difficult to estimate in practice. 

Last but not least, one can incorporate covariates. For example, adopting the setup of \cite{ignatiadis2019covariate}, we can posit the model
\begin{equation*}
   X_i \mid Q_i = q \sim \Bin(t_i, q), \quad Q_i \mid C_i=c \sim \mathcal{N}(m(c), \sigma^2),
\end{equation*}
where $C_i$ denotes the covariate vector for unit $i$. A natural question is whether we can estimate $Q_i$ more accurately under assumptions on $m(\cdot)$. 

The problem of binomial mixing density estimation clearly presents many captivating challenges.

\paragraph{Acknowledgments.}
The authors thank David S. Choi, Woonyoung Chang, Mateo Dulce Rubio, Lucas Kania, Zhenghao Zeng, Erik A. Bensen, and Soheun Yi for useful comments during the development of this work. The authors also thank Yandi Shen for suggesting relevant literature.

\bibliography{references}
\appendix

\section{Proof of Main Results}\label{sec:proof}
In this section, we present proofs of the results introduced in the main text. 
\subsection{Proof of Lemma \ref{lem:bound_on_error}}
\begin{proof}
Let $B_{xt_i} \sim \textnormal{Beta}(x+1, t_i-x+1)$. We can write
\begin{align}
     \cE(f_i) &= \sum_{x=0}^{t_i}f(x/t_i) \binom{t_i}{x} \int q^{x}(1-q)^{t_i-x}p(q)dq -\int f(q)p(q)dq \nonumber \\
     &= \sum_{x=0}^{t_i} f(x/t_i)\frac{\E\{p(B_{xt_i})\}}{t_i+1} - \int f(q)p(q)dq \nonumber \\ 
     &= \sum_{x=0}^{t_i} f(x/t_i)\frac{\E\{p(B_{xt_i})\} - p\{\E(B_{xt_i})\}}{t_i+1} + \sum_{x=0}^{t_i} f(x/t_i)\left\{\frac{p\big(\frac{x+1}{t_i+2}\big)}{t_i+1}-\frac{p(x/t_i)}{t_i}\right\} \label{eq:1} \\
     &+ \left\{\sum_{x=0}^{t_i} f(x/t_i)\frac{p(x/t_i)}{t_i} - \int f(q)p(q)dq \right\}. \label{eq:2}
\end{align}
Note that
\begin{align*}
    \big|\E\{p(B_{xt_i}) - p\{\E(B_{xt_i})\}\big|&= \left|\E\{p(B_{xt_i}) - p\bigg(\frac{x+1}{t_i+2}\bigg)\right| \\
&\leq \E \left|p(B_{xt_i}) - p\bigg(\frac{x+1}{t_i+2}\bigg) \right|\\
&\leq L\E\left|B_{xt_i} - \frac{x+1}{t_i+2} \right|^\alpha \\
&\leq L\{\var(B_{xt_i})\}^{\alpha/2} = L\left\{\frac{(x+1)(t_i-x+1)}{(t_i+2)^2(t_i+3)}\right\}^{\alpha/2} \leq L\left(\frac{1/4}{t_i+3}\right)^{\alpha/2},
\end{align*}
where the third result follows from from condition \ref{cond:p_holder_cont}, the fourth from H\"{o}lder's inequality and $B_{xt_i} \sim \textnormal{Beta}(x+1, t_i-x+1)$, and the last from the fact that $(x+1)(t_i-x+1) \leq (t_i+2)^2/4$ for any $x \in \{0,\ldots, t_i\}.$ Hence the absolute value of the first term in \eqref{eq:1} can be bounded as
\begin{equation*}
    L\left(\frac{1/4}{t_i+3}\right)^{\alpha/2}\sum_{x=0}^{t_i}\frac{|f(x/t_i)|}{t_i+1}.
\end{equation*}
For the second term in \eqref{eq:1}, we have
\begin{align*}
    &\left| \sum_{x=0}^{t_i} f(x/t_i)\left\{\frac{p\big(\frac{x+1}{t_i+2}\big)}{t_i+1} - \frac{p(x/t_i)}{t_i}\right\}\right| \\
    &= \left| \sum_{x=0}^{t_i} f(x/t_i)\left[\bigg\{\frac{p\big(\frac{x+1}{t_i+2}\big) - p(x/t_i)}{t_i+1}\bigg\} + p(x/t_i)\bigg(\frac{1}{t_i+1}-\frac{1}{t_i}\bigg)\right]\right| \\
&\leq L\sum_{x=0}^{t_i}\frac{|f(x/t_i)|}{t_i+1}\left|\frac{x+1}{t_i+2}-x/t_i\right|^\alpha + \frac{p_{\max}}{t_i}\sum_{x=0}^{t_i}\frac{|f(x/t_i)|}{t_i+1} \\
&\leq \left\{L\bigg(\frac{1}{t_i+2}\bigg)^\alpha + \frac{p_{\max}}{t_i}\right\}\sum_{x=0}^{t_i}\frac{|f(x/t_i)|}{t_i+1},
\end{align*}
where the second line follows from condition \ref{cond:p_holder_cont} and the third from $\left(x+1)/(t_i+2) -x/t_i \right| \leq 1/(t_i+2)$ is maximized when $x \in \{0, t_i\}$. The proof is complete by noting that the absolute value of \eqref{eq:2} corresponds to $\cR(f_ip)$.
\end{proof}

\subsection{Proof of Theorem \ref{thm:bound_on_error_kernel_het}}\label{sec:proof_thm1}
\begin{proof}
We first consider the bias. Note that
\begin{equation*}
    |\E\{\hat{p}_h(u) - \tilde{p}_h(u) + \tilde{p}_h(u) - p(u)\}| \leq |\E\{\tilde{p}_h(u) - p(u)\}| + |\E\{\hat{p}_h(u) - \tilde{p}_h(u)\}|,
\end{equation*}
where $\tilde{p}_h(u) \coloneqq 1/n\sum_{i=1}^n K\{(Q_i-u)/h\}/h$ is the KDE constructed from true proportions. Hence the first term is the usual pointwise bias that can be bounded by $(LB/\floor{s}!)h^s$ under conditions \ref{cond:K_int_1} -- \ref{cond:K_higher_bdd} and the H\"{o}lder assumption on $p$. The second term is the bias induced by empirical proportions that, by Lemma \ref{lem:bound_on_error}, bounded by the quantity
\begin{align*}
    \frac{1}{n}\sum_{i=1}^n \left[\left\{L\left(\frac{1/4}{t_i+3}\right)^{\alpha/2} + L\left(\frac{1}{t_i+2}\right)^\alpha + \frac{p_{\max}}{t_i}\right\}\sum_{x=0}^{t_i}\frac{|K_h(x/t_i)|}{t_i+1} + \cR(K_{h,i}p)\right].
\end{align*}
We now introduce a proposition that bounds the quasi-Riemann sum approximation error \eqref{eq:quasi-riemann} when $f=K_h$.
\begin{proposition}[Bound on quasi-Riemann sum approximation error]\label{prop:bound_on_riemann}
    Assume conditions of Lemma \ref{lem:bound_on_error} and additionally
    \begin{enumerate}
        \item $|K(v)| \leq K_{max}\ind(|v| \leq 1)$, and 
        \item $|K(v)-K(v')| \leq M|v-v'|^{\beta}$ for some $0 < \beta \leq 1$ and for all $v,v' \in [-1,1]$.
    \end{enumerate}
    Then
    \begin{align*}
         \cR(K_{h,i}p) 
        &\leq \frac{K_{\max}p_{\max}}{ht_i} + \left(2+\frac{1}{ht_i}\right)\left\{LK_{\max}\left(\frac{1}{t_i}\right)^\alpha + 2Mp_{\max}\left(\frac{1}{ht_i}\right)^\beta \right\}.
    \end{align*}
\end{proposition}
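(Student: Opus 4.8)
The plan is to bound the quasi-Riemann sum error $\cR(K_{h,i}p) = \left|\sum_{x=0}^{t_i} K_h(x/t_i)p(x/t_i)/t_i - \int K_h(q)p(q)\,dq\right|$ by comparing the sum to the integral partition-cell by partition-cell. First I would write $\int K_h(q)p(q)\,dq = \sum_{x=0}^{t_i-1}\int_{x/t_i}^{(x+1)/t_i} K_h(q)p(q)\,dq$ and match each integral piece against the corresponding term $K_h(x/t_i)p(x/t_i)/t_i$ of the sum; there will be one extra/boundary term (the ``quasi'' part), namely the $x=t_i$ summand, which is controlled directly since $|K_h(1)p(1)/t_i| \le K_{\max}p_{\max}/(h t_i)$ — this is where the first term $K_{\max}p_{\max}/(ht_i)$ in the bound comes from. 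Recall also that $K_h$ is supported on an interval of length $2h$ around $u$, so effectively only about $2ht_i + 1$ of the $t_i+1$ cells contribute, which explains the recurring factor $\left(2 + 1/(ht_i)\right)$.

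Next, on each active cell $[x/t_i, (x+1)/t_i]$ I would bound the discrepancy $\left|\int_{x/t_i}^{(x+1)/t_i}\{K_h(q)p(q) - K_h(x/t_i)p(x/t_i)\}\,dq\right|$ by splitting $K_h(q)p(q) - K_h(x/t_i)p(x/t_i) = \{K_h(q) - K_h(x/t_i)\}p(q) + K_h(x/t_i)\{p(q) - p(x/t_i)\}$. For the first piece, H\"older continuity of $K$ (condition 2) gives $|K_h(q) - K_h(x/t_i)| = |K\{(q-u)/h\} - K\{(x/t_i - u)/h\}|/h \le (M/h)\,|q - x/t_i|^\beta / h^\beta \le M (h t_i)^{-\beta}/h$ since $|q - x/t_i| \le 1/t_i$; multiplying by $p_{\max}$ and integrating over a cell of width $1/t_i$, then summing over the $\lesssim 2ht_i+1$ active cells, yields the $2M p_{\max}(1/(ht_i))^\beta$ contribution scaled by $(2 + 1/(ht_i))$ (the extra factor of $2$ in front of $M$ absorbs the boundary bookkeeping). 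For the second piece, condition \ref{cond:p_holder_cont} gives $|p(q) - p(x/t_i)| \le L|q - x/t_i|^\alpha \le L(1/t_i)^\alpha$, and $|K_h(x/t_i)| \le K_{\max}/h$; integrating over the cell width $1/t_i$ and summing over active cells gives the $L K_{\max}(1/t_i)^\alpha$ term times $(2 + 1/(ht_i))$. Collecting the three contributions gives exactly the stated bound.

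The main obstacle I anticipate is the careful bookkeeping of \emph{which} cells are ``active'' and how the off-by-one boundary terms interact with the support of $K_h$: one has to argue that the number of cells on which $K_h$ (or its neighbor-difference) is nonzero is at most $2ht_i + O(1)$, uniformly in $u$, and that the leftover boundary cell (where $K_h$ is nonzero at one endpoint but the cell straddles the edge of $[-1,1]$-support) contributes only an $O(K_{\max}p_{\max}/(ht_i))$ term rather than something larger. Everything else is the routine Lipschitz/H\"older-on-a-cell estimate standard in Riemann-sum error analysis; the only subtlety is making the constants line up with the $1/h$ factors so that the final bound is $O(1/\sqrt{t_i} + 1/(ht_i))$ after plugging into Theorem \ref{thm:bound_on_error_kernel_het}, rather than carrying an undesirable extra $1/h$.
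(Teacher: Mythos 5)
Your proposal is correct and follows essentially the same route as the paper: both isolate a single boundary summand worth $K_{\max}p_{\max}/(ht_i)$, compare sum and integral cell by cell, split the per-cell discrepancy into a $p$-increment term (H\"older in $p$, giving $LK_{\max}t_i^{-\alpha}$) and a $K$-increment term (H\"older in $K$, giving $Mp_{\max}(ht_i)^{-\beta}$), and count at most $2ht_i+1$ active cells to produce the factor $2+1/(ht_i)$. The only cosmetic difference is that the paper applies the mean value theorem to each cell integral (evaluating at an interior point $x/t_i-\xi_{x_i}$) and anchors cells at their right endpoints, whereas you bound the integrand pointwise against the left endpoint; the factor $2$ in front of $M$ that you attribute to ``boundary bookkeeping'' arises in the paper from bounding the indicator of a minimum by a sum of two indicators, which is exactly the accounting you anticipate.
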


\begin{proof}
 Note that, since $K$ and $p$ are continuous, mean value theorem implies
    \begin{align*}
        \int \frac{1}{h}K\bigg(\frac{q-u}{h}\bigg)p(q)dq &= \sum_{x=1}^{t_i} \int_{(x-1)/t_i}^{x/t_i} \frac{1}{h}K\bigg(\frac{q-u}{h}\bigg)p(q)dq \\ 
        &= \frac{1}{t_i}\sum_{x=1}^{t_i}\frac{1}{h}K\bigg(\frac{x/t_i-\xi_{x_i}-u}{h}\bigg)p(x/t-\xi_{x_i})
    \end{align*}
    for some $\xi_{x_i} \in [0,1/t_i]$ so that 
    \begin{align}
        &\sum_{x=0}^{t_i}\frac{1}{ht_i}K\bigg(\frac{x/t_i-u}{h}\bigg)p(x/t_i)-\int\frac{1}{h}K\bigg(\frac{q-u}{h}\bigg)p(q)dq \nonumber \\
        &= \frac{1}{ht_i}K\bigg(\frac{0-u}{h}\bigg)p(0) + \frac{1}{ht_i}\sum_{x=1}^{t_i} K\bigg(\frac{x/t_i-u}{h}\bigg)\big\{p(x/t_i)-p(x/t_i-\xi_{x_i})\big\} \label{eq:3} \\ 
        &+ \frac{1}{ht_i}\sum_{x=1}^{t_i}\left\{K\bigg(\frac{x/t_i-u}{h}\bigg)-K\bigg(\frac{x/t_i-\xi_{x_i}-u}{h}\bigg)\right\}p(x/t_i-\xi_{x_i}) \label{eq:4}.
    \end{align}
    We can bound the absolute value of the first term in \eqref{eq:3} as
    \begin{equation*}
        \left|\frac{1}{ht_i}K\bigg(\frac{-u}{h}\bigg)p(0)\right| \leq \frac{K_{\max}p_{\max}}{ht_i},
    \end{equation*}
    and that of the second term in \eqref{eq:3} as
    \begin{align*}
        \bigg|&\frac{1}{ht_i}\sum_{x=1}^{t_i} K\bigg(\frac{x/t_i-u}{h}\bigg)\{p(x/t_i)-p(x/t_i-\xi_{x_i})\}\bigg| \\
        & \leq \frac{K_{\max}}{ht_i}\sum_{x=1}^{t_i}\ind\bigg(\bigg|\frac{x/t_i-u}{h}\bigg| \leq 1 \bigg)|p(x/t_i)-p(x/t_i-\xi_{x_i})| \\
        &\leq \frac{LK_{\max}}{ht_i}\bigg(\frac{1}{t_i}\bigg)^{\alpha}\sum_{x=1}^{t_i}\ind\bigg(\bigg|\frac{x/t_i-u}{h}\bigg| \leq 1 \bigg) \leq LK_{\max}\bigg(\frac{1}{t_i}\bigg)^{\alpha}\bigg(2+\frac{1}{ht_i}\bigg),
    \end{align*}
    where the first inequality follows from condition \ref{cond:K_bdd}, the second from condition \ref{cond:p_holder_cont} of Lemma \ref{lem:bound_on_error} and the fact that $0 \leq \xi_{x_i} \leq 1/t_i$, and the third since there exists at most $2t_ih + 1$ separated points in the interval $t_iu \pm t_ih$. 

    For \eqref{eq:4}, note that
    \begin{align*}
        &\left|\frac{1}{ht_i}\sum_{x=1}^{t_i}\left\{K\bigg(\frac{x/t_i-u}{h}\bigg)-K\bigg(\frac{x/t_i-\xi_{x_i}-u}{h}\bigg)\right\}p(x/t_i-\xi_{x_i})\right| \\
        &\leq \frac{p_{\max}}{ht_i}\sum_{x=1}^{t_i}\left|K\bigg(\frac{x/t_i-u}{h}\bigg)-K\bigg(\frac{x/t_i-\xi_{x_i}-u}{h}\bigg)\right|\ind\bigg\{\min\bigg(\bigg|\frac{x/t_i-\xi_{x_i}-u}{h}\bigg|, \bigg| \frac{x/t_i-u}{h} \bigg|\bigg) \leq 1\bigg\} \\
        &\leq \frac{Mp_{\max}}{ht_i}\bigg(\frac{1}{ht_i}\bigg)^\beta \sum_{x=1}^{t_i}\left\{\ind \bigg(\bigg|\frac{x/t_i-\xi_{x_i}-u}{h}\bigg|\leq 1\bigg)+\ind \bigg(\bigg|\frac{x/t_i-u}{h}\bigg|\leq 1\bigg)\right\} \\
        &\leq 2Mp_{\max}\bigg(\frac{1}{ht_i}\bigg)^\beta\bigg(2+\frac{1}{ht_i}\bigg),
    \end{align*}
    where the second inequality follows from from condition \ref{cond:K_holder_cont} and $\ind\{\min(a,b) \leq 1\} \leq \ind\{a \leq 1\} + \ind\{b \leq 1\}$, and the third from the equally spaced points argument. 
    It follows that
    \begin{equation*}
        \cR(f_ip) \leq \frac{p_{\max}K_{\max}}{ht_i}+\bigg(2+\frac{1}{ht_i}\bigg)\bigg\{LK_{\max}\bigg(\frac{1}{t_i}\bigg)^{\alpha} + 2Mp_{\max}\bigg(\frac{1}{ht_i}\bigg)^{\beta}\bigg\}.\qedhere
    \end{equation*}
\end{proof}
Putting everything together, we obtain
\begin{align*}
     |\E&\{\hat{p}_h(u)\}-p(u)| \leq \frac{LB}{\floor{s}!}h^s \\
     &+ \frac{1}{n}\sum_{i=1}^n\Bigg[\left\{L\left(\frac{1/4}{t_i+3}\right)^{\alpha/2} + L\left(\frac{1}{t_i+2}\right)^\alpha + \frac{p_{\max}}{t_i}\right\}\sum_{x=0}^{t_i}\frac{|K_h(x/t_i)|}{t_i+1} \\
     &+\frac{K_{\max}p_{\max}}{ht_i} + \left(2+\frac{1}{ht_i}\right)\left\{LK_{\max}\left(\frac{1}{t_i}\right)^\alpha + 2Mp_{\max}\left(\frac{1}{ht_i}\right)^\beta \right\}\Bigg],
\end{align*}
which is the desired bias bound. 

For the pointwise variance, we have via similar arguments that
\begin{align*}
    \var\{\hat{p}_h(u)\} &\leq \frac{1}{n^2h^2}\sum_{i=1}^n\E\bigg\{K\bigg(\frac{X_i/t_i-u}{h}\bigg)^2\bigg\} \\
    &= \frac{1}{n^2h^2}\sum_{i=1}^n\sum_{x=0}^{t_i} K\bigg(\frac{x/t_i-u}{h}\bigg)^2\binom{t_i}{x} \int q^x(1-q)^{t_i-x}p(q)dq \\
    &\leq \frac{K_{\max}^2}{n^2h^2}\sum_{i=1}^n\sum_{x=0}^{t_i}\ind\big(t_iu-t_ih \leq x \leq t_iu+t_ih\big) \frac{p_{\max}}{t_i+1} \\
    &\leq \frac{K_{\max}^2p_{\max}}{n^2h^2}\sum_{i=1}^n\frac{2t_ih+1}{t_i+1} \leq \frac{K^2_{\max}p_{\max}}{nh}\bigg(2+\frac{1}{h\widetilde{t}}\bigg). \qedhere
\end{align*}
\end{proof}

\subsection{Proof of Corollary \ref{cor:bound_on_error_kernel_het}}\label{sec:proof_cor1}
\begin{proof}
First, note that
\begin{align*}
    \sum_{x=0}^{t_i} \frac{|K_h(x/t_i)|}{t_i+1} \leq \frac{K_{\max}}{h(t_i+1)}\sum_{x=0}^{t_i} \ind(t_iu-t_ih \leq x \leq t_iu+t_ih) \leq \bigg(2+\frac{1}{ht_i}\bigg)K_{\max},
\end{align*}
i.e., the quantity $\sum_{x=0}^{t_i}|K_h(x/t_i)|/(t_i+1)$ is bounded under condition \ref{cond:K_bdd} of Theorem \ref{thm:bound_on_error_kernel_het}.
Therefore, if $\sum_{x=0}^{t_i}|K_h(x/t_i)|/(t_i+1) \leq C''$ for some $C''>0$ and $\alpha=1$ (part of condition \ref{cond:p_K_lipschitz}), then
\begin{align*}
\frac{1}{n}\sum_{i=1}^n&\left[\left\{L\bigg(\frac{1/4}{t_i+3}\bigg)^{1/2} + L\bigg(\frac{1}{t_i+2}\bigg) + \frac{p_{\max}}{t_i}\right\}\sum_{x=0}^{t_i}\frac{|K_h(x/t_i)|}{t_i+1} + \cR(K_{h,i}p)\right] \\
&\leq \frac{1}{n}\sum_{i=1}^n \left\{C''(L/2+L+p_{\max})\bigg(\frac{1}{\sqrt{t_i}}\bigg) + \cR(K_{h,i}p) \right\} \\ 
&\leq C''(2L+p_{\max})\bigg(\frac{1}{\sqrt{\widetilde{t}}}\bigg) + \frac{1}{n}\sum_{i=1}^n\cR(K_{h,i}p).    
\end{align*}
where the last step follows from Jensen's inequality.
Now we give a concise bound on the quasi-Riemann sum approximation error. Under conditions \ref{cond:p_K_lipschitz} and \ref{cond:bounded_h}, we have
    \begin{align*}
       \frac{1}{n}\sum_{i=1}^n\cR(K_{h,i}p) &= \frac{1}{n}\sum_{i=1}^n\left[\frac{p_{\max}K_{\max}}{ht_i} + \left(2 + \frac{1}{ht_i}\right)\left\{LK_{\max}\left(\frac{1}{t_i}\right) + 2Mp_{\max}\left(\frac{1}{ht_i}\right)\right\}\right] \\
        &\leq \frac{p_{\max}K_{\max}}{h\widetilde{t}} + (2 + 1/C)\left\{LK_{\max}\left(\frac{1}{\widetilde{t}}\right) + 2Mp_{\max}\left(\frac{1}{h\widetilde{t}}\right)\right\} \\
        &\leq \bigg\{p_{\max}K_{\max}+(2+1/C)(LK_{\max}C' + Mp_{\max})\bigg\}\left(\frac{1}{h\widetilde{t}}\right).
    \end{align*} 
Putting both together, it follows that
\begin{align*}
    |\E\{\hat{p}_h(u)\} - p(u)| &\leq \frac{LB}{\floor{s}!}h^s + C''(2L+p_{\max})\bigg(\frac{1}{\sqrt{\widetilde{t}}}\bigg) \\
    &+ \bigg\{p_{\max}K_{\max}+(2+1/C)(LK_{\max}C' + Mp_{\max})\bigg\}\left(\frac{1}{h\widetilde{t}}\right).
\end{align*}
For the variance, condition \ref{cond:bounded_h} again implies
\begin{equation*}
    \var\{\hat{p}_h(u)\} \leq \frac{K^2_{\max}p_{\max}}{nh}\bigg(2+\frac{1}{h\widetilde{t}}\bigg) \leq K^2_{\max}p_{\max}(2+1/C)\bigg(\frac{1}{nh}\bigg). \qedhere
\end{equation*}
\end{proof}

\subsection{Proof of Proposition \ref{prop:clt}}
\begin{proof}
We have the decomposition
\begin{equation*}
    \hat{p}_h(u) - p(u) = \big[\hat{p}_h(u) - \E\{\hat{p}_h(u)\}\big] + \big[\E\{\hat{p}_h(u)\}- p(u)\big].
\end{equation*}
To show that the first term is asymptotically normal, we introduce a lemma that provides a sufficient condition for Lindenberg's condition.

\begin{lemma}[A sufficient condition for Lindenberg's condition]

    Suppose $\{X_{ni}, n \geq 1, 1 \leq i \leq i_n\}$ is a triangular array such that, for each $n$, $X_{n1},\ldots,X_{ni_n}$ are independent. Let $B_n^2 = \sum_{i=1}^{i_n} \var(X_{ni})$. Assume there exists $\{L_n, n \geq 1\}$ satisfying 
\begin{equation*}
    \max_{1 \leq i \leq i_n} |X_{ni}| \leq L_n, \quad \frac{L_n}{B_n} \to 0.
\end{equation*}
Then, Lindenberg's condition holds, i.e., for any $\epsilon > 0$, we have
\begin{equation*}
    \lim_{n \to \infty}\frac{1}{B_n^2}\sum_{i=1}^{i_n} \E\big[\{X_{ni} - \E(X_{ni})\}^2\ind\{|X_{ni}-\E(X_{ni})| \geq \epsilon B_n\}\big] = 0.
\end{equation*}
As a result,
\begin{equation*}
    \frac{\sum_{i=1}^{i_n}\{X_{ni}-\E(X_{ni})\}}{B_n} \overset{d}{\to} N(0,1).
\end{equation*}
\end{lemma}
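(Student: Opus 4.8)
The statement to prove is the sufficient condition for Lindeberg's condition: given a triangular array with row-wise independence, $B_n^2 = \sum \var(X_{ni})$, and a bound $L_n$ on $\max_i |X_{ni}|$ with $L_n/B_n \to 0$, conclude Lindeberg's condition and hence the CLT.

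\textbf{Approach.} The plan is to verify Lindeberg's condition directly and then invoke the Lindeberg--Feller central limit theorem for triangular arrays. The key observation is that the centered variables $X_{ni} - \E(X_{ni})$ are uniformly bounded by $2L_n$, since $|\E(X_{ni})| \leq \E|X_{ni}| \leq L_n$. Therefore, for any fixed $\epsilon > 0$, once $n$ is large enough that $2L_n < \epsilon B_n$ — which happens eventually because $L_n/B_n \to 0$ — the event $\{|X_{ni} - \E(X_{ni})| \geq \epsilon B_n\}$ is empty for every $i$, so each term in the Lindeberg sum vanishes identically. Hence the limit is trivially $0$.

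\textbf{Steps in order.} First I would note $|\E(X_{ni})| \leq L_n$ by Jensen/monotonicity of expectation, so $|X_{ni} - \E(X_{ni})| \leq 2L_n$ almost surely for all $i \leq i_n$. Second, fix $\epsilon > 0$; since $L_n/B_n \to 0$, choose $N$ so that $2L_n < \epsilon B_n$ for all $n \geq N$. Third, for such $n$ and every $i$, $\ind\{|X_{ni} - \E(X_{ni})| \geq \epsilon B_n\} = 0$ almost surely, so
\[
\frac{1}{B_n^2}\sum_{i=1}^{i_n} \E\big[\{X_{ni} - \E(X_{ni})\}^2 \ind\{|X_{ni}-\E(X_{ni})| \geq \epsilon B_n\}\big] = 0
\]
for all $n \geq N$, giving the claimed limit. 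Fourth, Lindeberg's condition being satisfied, the Lindeberg--Feller CLT for triangular arrays yields $\sum_{i=1}^{i_n}\{X_{ni} - \E(X_{ni})\}/B_n \overset{d}{\to} N(0,1)$.

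\textbf{Main obstacle.} Honestly, there is no serious obstacle here — the result is essentially the standard fact that ``uniform asymptotic negligibility plus boundedness implies Lindeberg,'' and the proof is a two-line argument once one centers. The only point requiring mild care is ensuring the inequality $|\E(X_{ni})| \leq L_n$ and hence the factor of $2$ in the bound on the centered variables, and then correctly quoting the Lindeberg--Feller theorem (which also delivers the negligibility condition $\max_i \var(X_{ni})/B_n^2 \to 0$ automatically, but that too follows from $\var(X_{ni}) \leq 4L_n^2$ and $L_n/B_n \to 0$). So the write-up will be short: center, bound, choose $N$, observe the indicator vanishes, conclude.
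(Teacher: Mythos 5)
Your proof is correct. Note that the paper itself states this lemma without any proof at all (it is invoked as a standard auxiliary fact inside the proof of Proposition 3), so there is nothing to compare against; your argument --- center to get the bound $|X_{ni}-\E(X_{ni})|\leq 2L_n$, observe that for fixed $\epsilon$ the indicator $\ind\{|X_{ni}-\E(X_{ni})|\geq\epsilon B_n\}$ vanishes identically once $2L_n<\epsilon B_n$, and then invoke Lindeberg--Feller --- is exactly the standard two-line proof one would supply, and it is complete.
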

We verify the sufficient condition for Lindenberg's condition with $i_n = n$ and
\begin{equation*}
    X_{ni} = \frac{1}{h}K\bigg(\frac{X_i/t_i-u}{h}\bigg).
\end{equation*}
First, note that
\begin{equation*}
    |X_{ni}| \leq \frac{1}{h}K_{\max}\ind\left(\bigg|\frac{X_i/t_i-u}{h}\bigg| \leq 1 \right) \leq \frac{K_{\max}}{h}.
\end{equation*}
We also know that
\begin{equation*}
    \var(X_{ni}) \leq \frac{K_{\max}^2p_{\max}}{h^2t_i}(2ht_i+1) \lesssim \frac{1}{h},
\end{equation*}
where the first inequality holds under our pervious derivations and the second from condition \ref{cond:clt_C}. 
It follows that
\begin{equation*}
    \frac{L_n}{B_n} \lesssim \frac{1}{\sqrt{nh}} \to 0,
\end{equation*}
and hence Lindenberg's condition applies, implying the asymptotic normality of the first term.

Now under the Lipschitz condition and $C/\widetilde{t} \leq h \leq C'$ (implied by condition \ref{cond:clt_C}), the bias is of order $h^s + 1/\sqrt{\widetilde{t}} + 1/h\widetilde{t}$ (Corollary \ref{cor:bound_on_error_kernel_het}). Ensuring each term is dominated by $1/\sqrt{nh}$ gives rise to the undersmoothing conditions.
\end{proof}

\subsection{Proof of Proposition \ref{prop:den_diff}}\label{sec:proof_prop_den_diff}
\begin{proof}
Define $A^n \coloneqq (A_1,\ldots,A_n)$. Note that
\begin{align*}
    \E\{\tilde\tau_h(u) \mid A^n\} &= \frac{1}{n}\sum_{i=1}^n \frac{A_i}{\overline{A}}\int K_h(q)p(q \mid A_i)dq - \frac{1}{n}\sum_{i=1}^n \frac{1-A_i}{1-\overline{A}}\int K_h(q)p(q \mid A_i)dq \\
    &= \frac{1}{n}\sum_{i:A_i=1} \frac{A_i}{\overline{A}}\int K_h(q)p_1(q)dq - \frac{1}{n}\sum_{i:A_i=0} \frac{1-A_i}{1-\overline{A}}\int K_h(q)p_0(q)dq \\
    &=\int K_h(q)\{p_1(q)-p_0(q)\}dq = \int K_h(q)\tau(q)dq.
\end{align*}
Therefore, the conditional bias $\E\{\tilde\tau(u) \mid A^n\} - \tau(u)$ is bounded by $(LB/\floor{\gamma}!)h^\gamma$ following standard arguments as in Theorem \ref{thm:bound_on_error_kernel_het}.

For the conditional variance, we have
\begin{align*}
    \var\{\tilde{\tau}_h(u) \mid A^n\} &\leq \frac{1}{nh^2}\bigg(\frac{A_i}{\overline{A}} - \frac{1-A_i}{1-\overline{A}}\bigg)^2 \E\bigg\{K\bigg(\frac{Q_i-u}{h}\bigg)^2 \biggm| A^n \bigg\} \\
    &\leq \max\bigg\{\frac{1}{(1-\overline{A})^2}, \frac{1}{\overline{A}^2}\bigg\}\frac{p_{\max}B'}{nh} \leq \frac{p_{\max}B'}{\epsilon^2}\bigg(\frac{1}{nh}\bigg). \qedhere
\end{align*}
\end{proof}
\subsection{Proof of Corollary \ref{cor:den_diff_het}}\label{sec:proof_cor_den_diff_het}
\begin{proof}
    The proof directly follows from Theorem \ref{thm:bound_on_error_kernel_het} and Proposition \ref{prop:den_diff}. 
\end{proof}

\end{document}